\newtheorem{theorem}{Theorem}[section]
\newtheorem{lemma}[theorem]{Lemma}
\newtheorem{proposition}[theorem]{Proposition}
\newtheorem{corollary}[theorem]{Corollary}
\newtheorem{definition}[theorem]{Definition}
\newcommand{\rotatedarrow}{\mathrel{\reflectbox{\rotatebox[origin=c]{135}{$\Rightarrow$}}}}
\newcommand{\Gpd}{\ensuremath{\mathrm{Gpd}}}
\newcommand{\Cs}{\ensuremath{\mathscr{C}}}
\newcommand{\Xs}{\ensuremath{\mathscr{X}}}
\newcommand{\Cocone}{\ensuremath{\mathrm{Cocone}}}
\newcommand{\Ec}{\ensuremath{\mathcal{E}}}
\newcommand{\Fc}{\ensuremath{\mathcal{F}}}
\newcommand{\Eq}{\ensuremath{\mathrm{Eq}}}
\newcommand{\Lan}{\ensuremath{\mathrm{Lan}}}
\newcommand{\Ext}{\ensuremath{\mathrm{Ext}_\Ec}}
\newcommand{\ExtS}{\ensuremath{\mathrm{Ext}_{\mathrm{Split}(\Ec)}}}
\newcommand{\MExt}{\ensuremath{\mathrm{MExt}_\Ec}}
\newcommand{\CExt}{\ensuremath{\mathrm{CExt}_\Gamma}}
\newcommand{\NExt}{\ensuremath{\mathrm{NExt}_\Gamma}}
\newcommand{\Spl}{\ensuremath{\mathrm{Spl}_\Gamma}}
\newcommand{\Triv}{\ensuremath{\mathrm{TExt}_\Gamma}}
\newcommand{\sTriv}{\ensuremath{\mathrm{TExt}_{\Gamma_{\mathrm{Split}}}}}
\newcommand{\Gal}{\ensuremath{\mathrm{Gal}}}
\newcommand{\ad}[4]{\xymatrix{#1 \ar@/^2ex/[r]^-{#3}  \ar@{}[r]|-\bot &#2 \ar@/^2ex/[l]^-{#4}}}
\newcommand{\catequivalence}[4]{\xymatrix{#1 \ar@/^2ex/[r]^-{#3}  \ar@{}[r]|-\simeq &#2 \ar@/^2ex/[l]^-{#4}}}
\title{A classification theorem for normal extensions}
\author{M.~Duckerts-Antoine}
\address{Centre for Mathematics, University of Coimbra, Departamento de Matem\'atica, Apartado 3008, 3001-501 Coimbra, Portugal}
\email{mathieud@mat.uc.pt}
\author{T.~Everaert}
\address{Institut de Recherche en Math\'ematique et Physique, Universit\'e catholique de Louvain, Chemin du Cyclotron, 2 bte L7.01.01, 1348 Louvain-la-Neuve, Belgium}
\email{tomas.everaert@uclouvain.be}
\thanks{The first author was partially supported by the Universit\'e catholique de Louvain, by the Centro de Matem\'atica da Universidade de Coimbra (CMUC), funded by the European Regional Development Fund through the program COMPETE, and by the Portuguese Government through the FCT - Funda\c{c}\~ao para a Ciˆ\^encia e a Tecnologia under the project PEst-C/MAT/UI0324/2013 and the grant number SFRH/BPD/98155/2013.}
\begin{document}

\begin{abstract}
For a particular class of Galois structures, we prove that the normal extensions are precisely those extensions that are ``locally''  split epic and trivial, and we use this to prove a ``Galois theorem'' for normal extensions. Furthermore, we interpret the normalisation functor as a Kan extension of the trivialisation functor.\\
Keywords: Galois theory, normal extension, normalisation functor.
\end{abstract}

\maketitle 

\section{Introduction}
For an admissible Galois structure  $\Gamma=(\Cs,\Xs, I,H ,\eta,\epsilon, \Ec,\Fc)$, the Fundamental Theorem \cite{Janelidze:Pure} provides, for every monadic extension $p\colon E\to B$, an equivalence
\[
\Spl(E,p) \simeq \Xs^{\downarrow_{\Fc} \Gal_{\Gamma}(E,p)}
\]
between the category of central extensions (= coverings) of $B$ that are split by $(E,p)$ and the category of discrete fibrations $G\to\Gal_{\Gamma}(E,p)$ of (pre)groupoids in $\Xs$ over the Galois (pre)groupoid
$\Gal_{\Gamma}(E,p)$, with components in the class $\Fc$. When, moreover,  $p\colon E\to B$ is such that it factors through every other monadic extension of $B$ (i.e.\ when it is \emph{weakly universal}), then \emph{every} central extension of $B$ is split by $(E,p)$, and the above equivalence becomes
\[
\CExt(B) \simeq \Xs^{\downarrow_{\Fc} \Gal_{\Gamma}(E,p)}.
\]
Now, as follows from Lemma \ref{localsections} below, this restricts to an equivalence
\[
\CExt(B) \cap \MExt(B)  \simeq \Xs^{\downarrow_{\mathrm{Split}(\Fc)} \Gal_{\Gamma}(E,p)}
\]
between the category of all monadic central extensions of $B$ and that of those discrete fibrations $G\to\Gal_{\Gamma}(E,p)$ whose components are not only in $\Fc$, but are also split epimorphisms. 

In particular, if $\Gamma$ is such that every monadic central extension is normal, the latter equivalence becomes
\begin{equation}\label{mainequivalence}
\NExt(B)  \simeq \Xs^{\downarrow_{\mathrm{Split}(\Fc)} \Gal_{\Gamma}(E,p)}.
\end{equation}
Examples of admissible Galois structures $\Gamma$ for which every monadic central extension is normal, are given by any Birkhoff subcategory (= a reflective subcategory closed under subobjects and regular quotients) $\Xs$ of an exact Mal'tsev category $\Cs$, for $\Ec$ and $\Fc$ the classes of regular epimorphisms in $\Cs$ and $\Xs$, respectively (see \cite{Janelidze-Kelly}). Hence, in this case, the equivalence (\ref{mainequivalence}) holds for every weakly universal monadic extension $p\colon E\to B$. 

The observation we wish to make here is that there is a much larger class of Galois structures $\Gamma$ for which the equivalence (\ref{mainequivalence}) holds for every weakly universal monadic extension, and that such a $\Gamma$ need neither be admissible nor satisfy the condition that every monadic central extension is normal, in general. Among such Galois structures, there is every $\Gamma=(\Cs,\Xs, I,H ,\eta,\epsilon, \Ec,\Fc)$ such that

\begin{itemize}
\item
$\Cs$ is an additive category, $\Xs$ is an arbitrary full reflective subcategory of $\Cs$, and $\Ec$ and $\Fc$ are the classes of all morphisms in $\Cs$ and $\Xs$, respectively;

\item
more generally, $\Cs$ is a pointed protomodular category, $\Xs$ is a reflective subcategory of $\Cs$ with a protoadditive \cite{EG-honfg} reflector $I$, and $\Ec$ and $\Fc$ are the classes of all morphisms in $\Cs$ and $\Xs$, respectively; 

\item
$\Cs$ is an exact Mal'tsev category, $\Xs$ is a Birkhoff subcategory of $\Cs$, and $\Ec$ and $\Fc$ are the classes of regular epimorphisms in $\Cs$ and $\Xs$, respectively---this is the case mentioned above.
\end{itemize}

In each of these cases, the following two conditions are satisfied, and we will show that under these two assumptions the equivalence (\ref{mainequivalence}) is always valid
\begin{itemize}
\item
the left-adjoint functor $I\colon \Cs\to \Xs$ preserves those pullback-squares
\[\xymatrix{
D \ar[r] \ar[d] & A \ar[d]^-f \\
C \ar[r]_-g & B}
\]
for which $f$ is a split epimorphism and $f$ and $g$ are in $\Ec$. 
\item
the induced Galois structure 
\[
\Gamma_{\mathrm{Split}}=(\Cs,\Xs, I,H ,\eta,\epsilon, \mathrm{Split}(\Ec),\mathrm{Split}(\Fc))
\]
is admissible, where the classes $\mathrm{Split}(\Ec)$ and $\mathrm{Split}(\Fc)$ consist of those morphisms in $\Ec$ and $\Fc$, respectively, that are also split epimorphisms.
\end{itemize}
 
In fact, in each of these cases the equivalence (\ref{mainequivalence}) not only holds for every weakly universal monadic extension, but for any weakly universal \emph{normal} extension $p\colon E\to B$ as well. The existence, for every $B$, of such a $p$ is related to that of a left adjoint to the inclusion functor $\NExt(\Cs)\to \Ext(\Cs)$ of the category of normal extensions into that of extensions, and we conclude the article with a closer look at this left adjoint. In particular, we explain how it can be viewed as a Kan extension of the ``trivialisation functor'', and we give a criterion for its existence based on this idea. 

\section{A characterisation of normal extensions}
Recall that a \emph{Galois structure} \cites{Janelidze:Pure,Janelidze:Recent} $\Gamma=(\Cs,\Xs, I,H ,\eta,\epsilon, \Ec,\Fc)$ consists of an adjunction 
\[
\ad{\Cs^{}}{\Xs^{}}{I}{H}
\]
with unit and counit 
\[
\eta\colon 1_\Cs \Rightarrow HI \quad \textrm{and} \quad \epsilon \colon IH \Rightarrow 1_\Xs,
\]
and two classes $\Ec$ and $\Fc$ of morphisms of $\Cs$ and $\Xs$, respectively. $\Ec$ and $\Fc$ are required to be closed under pullback and composition, and to contain all isomorphisms, and one asks that $I(\Ec)\subseteq \Fc$ and $H(\Fc)\subseteq \Ec$. Throughout, we shall call the morphisms $f\colon A\to B$ in the class $\Ec$ \emph{extensions} (of $B$) and write $(\Cs\downarrow_{\Ec} B)$ and $(\Xs\downarrow_{\Fc} Y)$ for the full subcategories of the comma categories $(\Cs\downarrow B)$ and $(\Xs\downarrow Y)$ determined by $\Ec$ and $\Fc$, respectively (for $B\in \Cs$ and $Y\in \Xs$). 

With respect to $\Gamma$, an extension $f\colon A\to B$ is said to be 
\begin{itemize}
\item
 \emph{trivial} if the naturality square
\[
\xymatrix{
A \ar[r]^-{\eta_A}\ar[d]_-f&HI(A)\ar[d]^-{HI(f)}\\
B\ar[r]_-{\eta_B} &HI(B)
}
\]
is a pullback;
\item
\emph{monadic} if the change-of-base functor $f^*\colon(\Cs \downarrow_\Ec B) \rightarrow (\Cs\downarrow_\Ec A)$ is monadic;
\item
\emph{central} (or \emph{a covering}) if it is ``locally'' trivial: there exists a monadic extension $p\colon E\to B$ such that $p^*(f)$ is a trivial extension; in this case one says that $f$ is \emph{split by $p$};
\item
\emph{normal} if it is a monadic extension and if it is split by itself, i.e.\ $f^*(f)$ is trivial.
\end{itemize}

We denote by $\Triv(\Cs)$, $\MExt(\Cs)$, $\CExt(\Cs)$ and $\NExt(\Cs)$ the full subcategories of $\Ext(\Cs)$ given by the trivial-, the monadic-, the central-, and the normal extensions, respectively, and by $\Triv(B)$, etc., the corresponding full subcategories of the comma category $(\Cs\downarrow B)$ (for $B\in \Cs$). For a given monadic extension $p\colon E\to B$, the full subcategory of $(\Cs \downarrow_\Ec B)$ whose objects are split by $p$ will be denoted $\Spl(E,p)$.

By definition, an extension is central when it is ``locally'' trivial. As it turns out, it is \emph{monadic} precisely when it is ``locally'' \emph{split epic} (but this would not make sense as a definition, of course!).

\begin{lemma}\label{localsections}
An extension $f\colon A\to B$ is monadic if and only if there exists a monadic extension $p\colon E\to B$ such that $p^*(f)$ is a split epimorphism.
\end{lemma}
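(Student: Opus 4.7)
The plan is to prove the two implications of the equivalence separately.

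For the forward direction, I would simply take $p := f$. Then $p$ is a monadic extension by hypothesis, and $p^*(f) = f^*(f)\colon A\times_B A \to A$ admits the diagonal $\Delta_f\colon A \to A\times_B A$ as a canonical section, so it is split epic.

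For the converse, suppose $p\colon E\to B$ is a monadic extension such that $\tilde{f} := p^*(f)\colon E\times_B A \to E$ is a split epimorphism; write $\tilde{p}\colon E\times_B A \to A$ for the other side of the pullback square. My strategy would combine two facts and then verify Beck's monadicity conditions for $f^*\colon(\Cs\downarrow_\Ec B)\to(\Cs\downarrow_\Ec A)$. The first fact is that any split epimorphism in $\Ec$ is itself a monadic extension: given a split epi $s\colon X\to Y$ with section $t$, the change-of-base $s^*$ has left adjoint $s\circ(-)$ (by closure of $\Ec$ under composition), reflects isomorphisms because $t^*s^* = (st)^* = \mathrm{id}$, and preserves/creates coequalizers of $s^*$-split pairs by the analogous section argument. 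Applied to $\tilde{f}$ this yields that $\tilde{f}^*$ is monadic. The second fact is the Beck--Chevalley isomorphism $\tilde{f}^*\circ p^* \cong \tilde{p}^*\circ f^*$, which follows directly from the universal property of pullbacks.

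To conclude I would verify Beck's conditions for $f^*$: existence of the left adjoint is immediate; for conservativity, if $f^*(h)$ is iso then $\tilde{p}^*(f^*(h)) \cong \tilde{f}^*(p^*(h))$ is iso, whence $p^*(h)$ is iso (since $\tilde{f}^*$ is monadic and hence reflects isos), whence $h$ is iso (since $p^*$ is monadic and hence reflects isos); and preservation/creation of coequalizers of $f^*$-split pairs is handled by the analogous Beck--Chevalley transfer, using that both $p^*$ and $\tilde{f}^*$ enjoy the relevant coequalizer property. The main obstacle is this last verification: it is essentially a standard bit of descent theory (``monadic extensions descend along monadic extensions''), but it requires careful bookkeeping to propagate the split-pair data through the Beck--Chevalley square and match the created coequalizers on the two sides.
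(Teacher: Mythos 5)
Your ``only if'' direction is exactly the paper's (take $p=f$; the diagonal splits $f^*(f)$), and your general plan for the converse --- reduce to Beck's theorem via the Beck--Chevalley isomorphism $\tilde{f}^*\circ p^*\cong\tilde{p}^*\circ f^*$ --- is in the spirit of the paper's one-line citation of Beck's monadicity theorem. Your first auxiliary fact and the conservativity transfer for $f^*$ are both fine. The gap is in the last step, and it is exactly the step you flag as ``careful bookkeeping'': it is not bookkeeping, the mechanism you describe fails. Chasing an $f^*$-split pair $(u,v)$ around the Beck--Chevalley square shows that $(\tilde{f}^*p^*u,\tilde{f}^*p^*v)$ has a split coequaliser, hence that $(p^*u,p^*v)$ has a coequaliser in $(\Cs\downarrow_\Ec E)$ created by $\tilde{f}^*$. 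But at that point $(u,v)$ is \emph{not} known to be a $p^*$-split pair, so Beck's theorem for the monadic $p^*$ gives you no way to descend that coequaliser to $(\Cs\downarrow_\Ec B)$. The split-pair data propagates from $B$ down to $E\times_BA$ and back up to $E$, but there is none left for the final leg from $E$ to $B$; closing that leg needs genuine descent along $p$, not Beck's split-pair condition.

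A correct version of your strategy runs the argument on the composite $g=p\circ\tilde{f}=f\circ\tilde{p}$ instead of on $f$ directly. First, $g^*\cong\tilde{f}^*\circ p^*$ \emph{is} monadic, and here your mechanism does work: applying the retraction $t^*$ of $\tilde{f}^*$ (where $t$ is a section of $\tilde{f}$) turns a $g^*$-split pair into a genuinely $p^*$-split pair, whose coequaliser is therefore created by $p^*$ and is absolute, hence preserved by $\tilde{f}^*$. Second, $\tilde{p}^*$ is conservative: if $\tilde{p}^*(x)$ is invertible then so is $\Sigma_{\tilde{f}}\tilde{p}^*(x)\cong p^*\Sigma_f(x)$, hence so is $\Sigma_f(x)=x$. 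Third, one needs the cancellation lemma: if $V\circ U$ is monadic, $U$ has a left adjoint and $V$ is conservative, then $U$ is monadic (the comparison map from the split coequaliser of $(Uu,Uv)$ to $U$ of the created coequaliser becomes invertible after applying $V$). Applying this to $g^*\cong\tilde{p}^*\circ f^*$ yields the claim. Equivalently, you could simply invoke the standard composition--cancellation properties of effective descent morphisms, which is essentially what the paper's reference to Beck's theorem is standing in for.
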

\begin{proof}
For the ``only if'' part, it suffices to take $p=f$. The other implication follows easily by applying Beck's monadicity theorem (see, for instance, \cite{MacDonald-Sobral-CategoricalFoundations}*{Theorem 2.4}).
\end{proof}

In the present article, we are particularly interested in those Galois structures  $\Gamma=(\Cs,\Xs, I,H ,\eta,\epsilon, \Ec,\Fc)$ for which the left-adjoint functor $I\colon \Cs\to \Xs$ preserves all pullback-squares
\begin{equation}\label{square}
\vcenter{\xymatrix{
D \ar[r] \ar[d] & A \ar[d]^-f \\
C \ar[r]_-g & B}}
\end{equation}
for which $f$ is a split epimorphism and $f$ and $g$ are in $\Ec$.  For such a $\Gamma$, we have that a normal extension is the same as a morphism which is ``locally'' a split epic trivial extension. To see this, first of all notice that a simple pullback-cancellation/composition argument yields
\begin{lemma}\label{trivialstable1}
If $I\colon \Cs\to \Xs$ preserves those pullbacks \eqref{square} for which $f$ is a split epimorphism and $f$ and $g$ are in $\Ec$, then trivial extensions which are also split epimorphisms are stable under pullback. 
\end{lemma}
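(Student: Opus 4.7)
The plan is to prove the lemma by a standard pullback pasting/cancellation argument. Let $f\colon A\to B$ be a trivial extension that is also a split epimorphism, and let $g\colon C\to B$ be in $\Ec$; form the pullback square and call it $(\star)$, with projections $f'\colon D\to C$ and $g'\colon D\to A$. Since both $\Ec$ and the class of split epimorphisms are closed under pullback, $f'$ is a split epimorphism in $\Ec$. What remains is to verify that the naturality square of $\eta$ at $f'$ is a pullback.

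To do this, I would consider the $3\times 2$ diagram obtained by pasting $(\star)$ on the left with the naturality square of $\eta$ at $f$ on the right:
\[
\xymatrix{
D \ar[r]^-{g'}\ar[d]_-{f'} & A \ar[r]^-{\eta_A}\ar[d]_-f & HI(A)\ar[d]^-{HI(f)}\\
C \ar[r]_-g & B\ar[r]_-{\eta_B} & HI(B).
}
\]
Both constituent squares are pullbacks---the left one by construction, the right one because $f$ is trivial---so by pasting, the outer rectangle is a pullback. By naturality of $\eta$, that same outer rectangle can equally be decomposed as the naturality square of $\eta$ at $f'$ on the left, pasted with the image under $HI$ of $(\star)$ on the right.

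The key observation is that this $HI$-image of $(\star)$ is again a pullback: since $f$ is a split epimorphism and $f,g\in\Ec$, the hypothesis says $I$ preserves $(\star)$, and $H$ preserves it as a right adjoint. Pullback-cancellation applied to the second decomposition then yields that the naturality square of $\eta$ at $f'$ is a pullback, i.e.\ $f'$ is trivial.

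There is no real obstacle here; the hypothesis on $I$ is tailor-made for this cube argument, and the entire proof reduces to one pasting followed by one cancellation. The only mild subtlety is recognising that the hypothesis must be invoked against $(\star)$ itself rather than against either of the naturality squares in the cube.
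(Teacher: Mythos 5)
Your proof is correct and is exactly the ``simple pullback-cancellation/composition argument'' that the paper invokes without writing out: paste the pullback with the naturality square of $\eta$ at $f$, re-decompose the outer pullback via naturality as the naturality square at $f'$ followed by the $HI$-image of the original pullback (which is a pullback by the hypothesis on $I$ and the fact that $H$, as a right adjoint, preserves pullbacks), and cancel. Nothing is missing.
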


Next, recall (for instance, from \cite{Janelidze-Sobral-Tholen}*{Proposition 1.6}) the following
\begin{lemma}\label{BarrKock}
Consider a commutative diagram
\[
\xymatrix{
A \ar[d]_a \ar[r] & B \ar[r] \ar[d]_b & C \ar[d]^c\\
A' \ar[r]_{f'} & B' \ar[r] & C'}
\]
with $a, b, c\in\Ec$, and assume that $f'^* \colon ( \Cs \downarrow_\Ec B')\to (\Cs \downarrow_\Ec A')$ reflects isomorphisms. The right-hand square is a pullback as soon as both the left-hand square and the outer rectangle are pullbacks. 
\end{lemma}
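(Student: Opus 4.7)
The plan is to form the pullback $P = B'\times_{C'} C$ of the bottom-right cospan, with projections $\pi_1 \colon P \to B'$ and $\pi_2 \colon P \to C$, and to invoke the universal property to obtain a unique comparison morphism $\phi \colon B \to P$ satisfying $\pi_1 \phi = b$ and $\pi_2 \phi = g$ (where $g$ denotes the top-right horizontal map). The right-hand square is a pullback precisely when $\phi$ is an isomorphism, so the goal is to prove invertibility of $\phi$. The only non-trivial tool available is the assumption that $f'^{*}$ reflects isomorphisms, so the strategy is to show that $f'^{*}(\phi)$ is an isomorphism and conclude.

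To apply $f'^{*}$, I first have to check that $\phi$ is a morphism in $(\Cs \downarrow_{\Ec} B')$: this uses that $b \in \Ec$ by hypothesis, and that $\pi_1 \in \Ec$ as a pullback of $c \in \Ec$ along the map $g'$ (using closure of $\Ec$ under pullback). Then the pullback-pasting lemma does the work. On the one hand, the pullback of $b$ along $f'$ is $a \colon A \to A'$ because the left-hand square is a pullback. On the other hand, the pullback $Q = A' \times_{B'} P$ of $\pi_1$ along $f'$ is, by pasting with the pullback defining $P$, canonically a model of $A' \times_{C'} C$; but by the outer rectangle assumption, so is $A$. This yields a canonical isomorphism $A \cong Q$.

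The remaining step is to verify that $f'^{*}(\phi) \colon A \to Q$ actually is this canonical comparison. By definition of change-of-base, $q_1 \circ f'^{*}(\phi) = a$; and composing with $\pi_2 \circ q_2$ gives $\pi_2 \circ \phi \circ f = g \circ f$, which are precisely the two equations determining the unique map from $A$ into $A' \times_{C'} C$ over $A'$ and over $C$. Hence $f'^{*}(\phi)$ is invertible, and the hypothesis on $f'^{*}$ then forces $\phi$ to be invertible. I expect the only real obstacle to be careful book-keeping: one must track that all three assumptions $a,b,c \in \Ec$ are used (for $A$, $B$, and $P$ to live in the appropriate slices), and that the identification $A \cong Q$ is indeed realised by $f'^{*}(\phi)$ rather than by some other comparison. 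Once those checks are in place, the argument is a double application of pullback pasting plus the reflection hypothesis.
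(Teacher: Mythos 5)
Your proof is correct and complete: you form the comparison $\phi\colon (B,b)\to (B'\times_{C'}C,\pi_1)$, observe that $\pi_1\in\Ec$ (so $\phi$ is a morphism of $(\Cs\downarrow_\Ec B')$), use the left-hand pullback and two pullback pastings to identify $f'^*(\phi)$ with the canonical comparison $A\to A'\times_{C'}C$, which is invertible because the outer rectangle is a pullback, and conclude by reflection of isomorphisms. The paper itself offers no proof---it recalls the lemma from Proposition 1.6 of Janelidze--Sobral--Tholen---and your argument is precisely the standard one for this Barr--Kock-type cancellation, with the two delicate points (that $\phi$ really lives in $(\Cs\downarrow_\Ec B')$, and that $f'^*(\phi)$ coincides with the pasting comparison) properly attended to.
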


We are now in a position to prove

\begin{proposition}\label{characterisationnormal}
Assume that $I\colon \Cs\to \Xs$ preserves those pullbacks \eqref{square} for which $f$ is a split epimorphism and $f$ and $g$ are in $\Ec$. Then, for any  $f\colon A\to B$ in $\Ec$, the following are equivalent
\begin{enumerate}
\item\label{a1}
$f$ is a normal extension;
\item \label{a2}
there exists a monadic extension $p\colon E\to B$ such that $p^*(f)$ is both a trivial extension (i.e.\ $f\in \Spl(E,p)$) and a split epimorphism.
\end{enumerate}
\end{proposition}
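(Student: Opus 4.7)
For $(\ref{a1})\Rightarrow(\ref{a2})$ I would simply take $p=f$: then $p^*(f)=f^*(f)$ is trivial by normality, and is a split epimorphism via the diagonal $\Delta\colon A\to A\times_B A$.

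The substance is $(\ref{a2})\Rightarrow(\ref{a1})$. Suppose $p\colon E\to B$ is a monadic extension with $q=p^*(f)\colon E\times_B A\to E$ a trivial split-epic extension. Applying Lemma~\ref{localsections} to $p$ and the split epi $q$ yields that $f$ itself is a monadic extension, and it remains to show that $f^*(f)=\pi_1\colon A\times_B A\to A$ is trivial. Let $\tilde p\colon E\times_B A\to A$ denote the other projection; as a pullback of the monadic extension $p$ along $f$, $\tilde p$ is again a monadic extension, so in particular $\tilde p^*$ reflects isomorphisms. Pulling $\pi_1$ back along $\tilde p$ produces a pullback
\[
\xymatrix{E\times_B A\times_B A \ar[r] \ar[d]_-{q_1} & A\times_B A \ar[d]^-{\pi_1}\\ E\times_B A \ar[r]_-{\tilde p} & A}
\]
in which $q_1$ is readily identified with $q^*(q)$; by Lemma~\ref{trivialstable1} this $q_1$ is a trivial (and split-epic) extension. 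Moreover, since $\pi_1$ is a split epi and both $\pi_1$ and $\tilde p$ lie in $\Ec$, the hypothesis on $I$ guarantees that $I$ preserves this pullback.

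The plan is then to apply Lemma~\ref{BarrKock} to the rectangle
\[
\xymatrix{E\times_B A\times_B A \ar[r] \ar[d]_-{q_1} & A\times_B A \ar[r]^-{\eta} \ar[d]_-{\pi_1} & HI(A\times_B A) \ar[d]^-{HI(\pi_1)}\\ E\times_B A \ar[r]_-{\tilde p} & A \ar[r]_-{\eta_A} & HI(A)}
\]
whose right-hand square is the naturality square at $\pi_1$ that we want to be a pullback. The left-hand square is the pullback just constructed; and using naturality of $\eta$, the outer rectangle can be factored as the naturality square at $q_1$ (a pullback by triviality of $q_1$) followed by $HI$ applied to the left-hand pullback above (a pullback since $I$ preserves that square). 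Hence the outer rectangle is a pullback, and since $q_1,\pi_1,HI(\pi_1)\in\Ec$ while $\tilde p^*$ reflects isomorphisms, Lemma~\ref{BarrKock} forces the right-hand square to be a pullback. So $f^*(f)$ is trivial and $f$ is normal.

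The main subtlety I anticipate is the choice of morphism along which to descend. The naive attempt is to pick a section $r$ of $q$ and use $s=\tilde p\circ r\colon E\to A$, which satisfies $f\circ s=p$, in place of $\tilde p$; but such an $s$ need not lie in $\Ec$, blocking the use of the hypothesis on $I$, and it is not obviously a descent morphism for Lemma~\ref{BarrKock} either. Using $\tilde p$ removes both obstructions at once, since $\tilde p\in\Ec$ automatically and $\tilde p^*$ reflects isomorphisms by virtue of $p$ being monadic.
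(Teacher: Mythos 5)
Your argument follows the paper's own proof essentially step for step: the same auxiliary pullback of $\pi_1=f^*(f)$ along the second projection $\tilde p=f^*(p)$ (the paper's $\bar p$), the same identification of the induced vertical map with the kernel-pair projection $q^*(q)$ of $q=p^*(f)$ (the paper's $\bar p_1$), the same use of Lemma~\ref{trivialstable1} to see that this map is a split epic trivial extension, the same factorisation of the outer rectangle via naturality of $\eta$ into the unit square of $q^*(q)$ followed by $HI$ of the preserved pullback, and the same final appeal to Lemma~\ref{BarrKock}. So you have not taken a different route.

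The one step that needs repair is your justification that $\tilde p^*$ reflects isomorphisms. You assert that $\tilde p$, being a pullback of the monadic extension $p$, is itself monadic; but pullback-stability of monadic extensions is not available in this generality---the paper has to impose it as a separate hypothesis in Proposition~\ref{prnormalisation}, so it cannot be invoked for free here. Fortunately all you need is the weaker statement that $\tilde p^*\colon(\Cs\downarrow_\Ec A)\to(\Cs\downarrow_\Ec E\times_BA)$ reflects isomorphisms, and this follows directly from $p$ being monadic: writing $\Sigma$ for the (faithful) composition functors, one has $\Sigma_{q}\circ\tilde p^*\cong p^*\circ\Sigma_f$, so if $\tilde p^*(u)$ is invertible then so is $p^*(\Sigma_f u)$, hence $\Sigma_f u$ (as $p^*$ reflects isomorphisms) and therefore $u$. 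This is exactly the paper's argument at that point; with that substitution your proof is complete.
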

\begin{proof}
To see that \ref{a1} implies \ref{a2}, it suffices to take $p=f$.

Conversely, let $f$ and $p$ be as in \ref{a2}, and consider the commutative diagram
\[
\xymatrix{
E\times_BA\times_BA \ar[r] \ar[d]_{\bar p_1} &  A\times_BA \ar[d]^{p_1} \ar[r]^-{\eta_{A\times_BA}} & HI(A\times_BA) \ar[d]^{HI(p_1)} \\
E\times_BA \ar[r]^{\bar p} \ar[d]_{\bar f} & A \ar[d]^f \ar[r]_{\eta_A} & HI(A)\\
E \ar[r]_p & B &}
\]
where the two squares on the left are pullbacks, and where $p_1$ and $\bar p_1$ are kernel pair projections of $f$ and $\bar f=p^*(f)$, respectively. We must prove that the remaining square is a pullback as well. By Lemma \ref{BarrKock}, it will suffice if we show the upper rectangle to be a pullback: indeed, since $p$ is a monadic extension, $p^*\colon (\Cs \downarrow_\Ec B)  \to (\Cs \downarrow_\Ec E)$ reflects isomorphisms, and this implies that the same must be true for ${\bar p}^*\colon (\Cs \downarrow_\Ec A)  \to (\Cs \downarrow_\Ec (E\times_BA))$. 

To see that the upper rectangle is indeed a pullback, note that it coincides with the outer rectangle of the commutative diagram  
\[
\xymatrix{
E\times_BA\times_BA \ar[r] \ar[d]_{\bar p_1} & HI(E\times_BA\times_BA) \ar[r] \ar[d]^{HI(\bar p_1)} &  HI(A\times_BA) \ar[d]^{HI(p_1)} \\
E\times_BA \ar[r]_-{\eta_{E\times_BA}} & HI(E\times_BA) \ar[r]_{HI(\bar p)} & HI(A).}
\]
Here, the right-hand square is the image under $HI$ of the left-hand upper square in the previous diagram, which is a pullback preserved by $I$, hence by $HI$; the left-hand square is induced by the unit $\eta$ and is a pullback, since $\bar p_1$ is a trivial extension by Lemma \ref{trivialstable1}.

Note, finally, that $f$ is a monadic extension by Lemma~\ref{localsections}.
\end{proof}

If we write $\mathrm{SSpl}_{\Gamma}(E,p)$ for the full subcategory of $\Spl(E,p)$ consisting of those $(A,f)\in\Spl(E,p)$ for which $p^*(f)$ is a split epimorphism, then Proposition \ref{characterisationnormal} may be expressed as an equality
\[
\NExt(B) = \bigcup_p \mathrm{SSpl}(E,p)
\]
where $p$ runs through all monadic extensions $p\colon E\to B$ of $B$. If there is a single $p\colon E\to B$ with $\mathrm{SSpl}(E',p')\subseteq \mathrm{SSpl}(E,p)$ for every monadic extension $p'\colon E'\to B$ of $B$, this equality moreover simplifies to   
\[
\NExt(B) = \mathrm{SSpl}(E,p).
\]
Such a $p$ often exists (assuming we are in the situation of Proposition \ref{characterisationnormal}): since split epic trivial extensions are stable under pullback (by Lemma \ref{trivialstable1}), examples are given by any $p\colon E\to B$ which factors through every normal extension of $B$. For instance, $p$ could be a weakly universal monadic extension of $B$ (= a weakly initial object of $\MExt(B)$) or a weakly universal normal extension (= a weakly initial object of $\NExt(B)$).

\section{The classification theorem}

Recall that an \emph{internal groupoid} $G$ in a category $\Cs$ is a diagram of the form
\begin{equation}\label{groupoid}
\xymatrix{
G_2 \ar@<1ex>[rr]^-{p_1}  \ar[rr]|-m  \ar@<-1ex>[rr]_-{p_2}  && G_1 \ar@(ul,ur)[]^-\sigma \ar@<1ex>[rr]^-{d}  \ar@<-1ex>[rr]_-{c} && G_0 \ar[ll]|-e 
}
\end{equation}
with 
\begin{equation}\label{pbgp}
\vcenter{
\xymatrix{
G_2 \ar[r]^-{p_2}  \ar[d]_-{p_1} &G_1\ar[d]^-{d} \\
G_1\ar[r]_-{c} &G_0
}}
\end{equation}
a pullback and such that $de=1_{}=ce$, $dm=dp_1$, $cm=cp_2$, $m(1_{},ec)=1_{}=m(ed,1_{})$,  $m(1_{}\times_{}m)=m( m\times_{}1_{})$, $d\sigma=c$, $c\sigma=d$, $m(1_{},\sigma)=ed$, and $m(\sigma, 1_{})=ec$.           

An \emph{internal functor} $f \colon G' \to G$ between groupoids $G'$ and $G$ in $\Cs$ is a triple $(f_0\colon G_0' \to G_0,f_1\colon G_1'\to G_1, f_2\colon G_2'\to G_2)$ of morphisms such that the evident squares in the diagram       \[
\xymatrix{
G_2'  \ar[d]_{f_2} \ar@<1ex>[rr]^-{p_1'}  \ar[rr]|-m  \ar@<-1ex>[rr]_-{p_2'}  &&G_1'  \ar@<1ex>[rr]^-{d'}  \ar@<-1ex>[rr]_-{c'} \ar[d]^-{f_1} && G_0' \ar[ll]|-{e'} \ar[d]^-{f_0} \\
G_2  \ar@<1ex>[rr]^-{p_1}  \ar[rr]|-{m}  \ar@<-1ex>[rr]_-{p_2}  &&G_1  \ar@<1ex>[rr]^-{d}  \ar@<-1ex>[rr]_-{c} && G_0 \ar[ll]|-{e} 
}
\]
 commute (from which it follows immediately that also $\sigma f_1'=f_1\sigma$). $f$ is a \emph{discrete fibration} when those commutative squares are moreover pullbacks. (Note that it suffices for this that the square $f_0c'=cf_1$ is a pullback.)

The category of groupoids and functors in $\Cs$ will be denoted by  $\Gpd(\Cs)$ and, for a fixed groupoid $G$, the full subcategory of the comma category ${(\Gpd(\Cs)\downarrow G)}$ given by the discrete fibrations $G' \to G$, by  $\Cs^{G}$. When $\Ec$ is a class of morphisms in $\Cs$, then $\Cs^{\downarrow_{\Ec} G}$ will denote the full subcategory of $\Cs^{G}$ of those  $(f_0,f_1,f_2)\colon G'\to G$ for which $f_0$, $f_1$ and $f_2$ are in $\Ec$. 

Any internal equivalence relation is a groupoid, which means in particular that every morphism $p\colon E\to B$ determines, via its kernel pair $(\pi_1^p,\pi_2^p)$, an internal groupoid $\Eq(p)$, as in the diagram
\[
\xymatrix{
\Eq(p) \times_E \Eq(p) \ar@<1ex>[rr]^-{p_1^p}  \ar@<-1ex>[rr]_-{p_2^p}  \ar[rr]|-{\tau} &&\Eq(p) \ar@<1ex>[rr]^-{\pi_1^p} \ar@<-1ex>[rr]_-{\pi_2^p} \ar@(ul,ur)[]^-{\sigma} && E\ar@{>}[ll]|-{\delta}.
}
\]
Notice, for any discrete fibration of groupoids  $f\colon G'\to G$, that $G'$ is an equivalence relation as soon as $G$ is.

Let us, from now on, consider a Galois structure $\Gamma=(\Cs,\Xs, I,H ,\eta,\epsilon, \Ec,\Fc)$ such that 
\begin{itemize}
\item
\emph{the left-adjoint functor $I\colon \Cs\to \Xs$ preserves those pullback-squares \eqref{square} for which $f$ is a split epimorphism and $f$ and $g$ are in $\Ec$.}
\end{itemize}
If $p\colon E\to B$ is an extension, then so are its kernel pair projections $\pi_1^p$ and $p_2^p$, hence $I$ preserves the pullback (\ref{pbgp}) for $G=\Eq(p)$.  Consequently, $I(\Eq(p))$ is again a groupoid, in $\Xs$, called the \emph{Galois groupoid} of $p$. We denote it $\Gal_{\Gamma}(E,p)$.

What we wish to prove now is that there is, under the additional condition~2. below, for every monadic extension $p\colon E\to B$ an equivalence
\begin{equation}\label{unnormalclassification}
\mathrm{SSpl}_{\Gamma}(E,p) \simeq \Xs^{\downarrow_{\mathrm{Split}(\Fc)}\Gal_{\Gamma}(E,p)}
\end{equation}
between the category of extensions $(A,f)$ of $B$ for which $p^*(f)$ is a split epic trivial extension, and the category of discrete fibrations 
\[
(f_0,f_1,f_2)\colon G' \to\Gal_{\Gamma}(E,p)
\]
in $\Xs$ whose components $f_0$, $f_1$ and $f_2$ are split epimorphisms and are in $\Fc$. When $p$ is such that $\mathrm{SSpl}(E,p)=\NExt(B)$  (for instance, if $p$ is a weakly universal monadic extension, or a weakly universal normal extension---see the end of the previous section) we then obtain
\[
\NExt(B) \simeq \Xs^{\downarrow_{\mathrm{Split}(\Fc)}\Gal_{\Gamma}(E,p)}
\]

Fix an extension $p\colon E\to B$.  By sending any extension $(A,f)$ of $B$ to the discrete fibration induced by the right-hand pullback square in, and displayed as the left-hand side of, the diagram
\[\vcenter{
\xymatrix{
\Eq(\bar{p}) \times_P \Eq(\bar{p}) \ar[d]  \ar@<1ex>[rr]^-{p_1^{\bar{p}}}  \ar@<-1ex>[rr]_-{p_2^{\bar{p}}}  \ar[rr]|-{} &&\Eq(\bar{p}) \ar[d] \ar@<1ex>[rr]^-{\pi_1^{\bar{p}}} \ar@<-1ex>[rr]_-{\pi_2^{\bar{p}}} && P\ar@{>}[ll]|-{}  \ar[rr]^-{\bar{p}=f^*(p)} \ar[d]^{p^*(f)} && A \ar[d]^f\\
\Eq(p) \times_E \Eq(p) \ar@<1ex>[rr]^-{p_1^p}  \ar@<-1ex>[rr]_-{p_2^p}  \ar[rr]|-{} &&\Eq(p) \ar@<1ex>[rr]^-{\pi_1^p} \ar@<-1ex>[rr]_-{\pi_2^p}  && E\ar@{>}[ll]|-{}  \ar[rr]_p && B,}}
\]
we obtain a functor $K^p\colon (\Cs\downarrow_{\Ec} B)\to \Cs^{\downarrow_{\Ec} \Eq(p)}$. It turns out  (see, for instance, \cite{Janelidze-Tholen,Janelidze-Tholen2}) that $\Cs^{\downarrow_{\Ec} \Eq(p)}$ is equivalent to the category $(\Cs\downarrow_{\Ec} E)^{T^p}$ of (Eilenberg-Moore) algebras for the monad $T^p=p^*\Sigma_p$, and that $K^p\colon (\Cs\downarrow_{\Ec} B)\to \Cs^{\downarrow_{\Ec} \Eq(p)}$ corresponds, via this equivalence, to the comparison functor $K^{T^p}\colon (\Cs\downarrow_{\Ec} B)\to (\Cs\downarrow_{\Ec} E)^{T^p}$,
whence

\begin{lemma}\label{monadicdiscreet}\cites{Janelidze-Tholen,Janelidze-Tholen2}
An extension $p\colon E\to B$ is  monadic if and only if the functor $K^p\colon (\Cs\downarrow_{\Ec} B)\to \Cs^{\downarrow_{\Ec} \Eq(p)}$ is an equivalence of categories. 
\end{lemma}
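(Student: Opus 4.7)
The plan is to reduce the lemma to the standard characterisation of monadicity via the comparison functor, by means of the equivalence $\Cs^{\downarrow_\Ec \Eq(p)} \simeq (\Cs\downarrow_\Ec E)^{T^p}$ already indicated in the excerpt.

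First I would set up the adjunction $\Sigma_p \dashv p^*\colon (\Cs\downarrow_\Ec B) \to (\Cs\downarrow_\Ec E)$, where $\Sigma_p$ is post-composition by $p$ (well defined since $\Ec$ is closed under composition) and $p^*$ is pullback along $p$ (well defined since $\Ec$ is closed under pullback); the counit at $(X,x)$ is the obvious projection $\Sigma_p p^*(X,x) \to (X,x)$. By the very definition of a monadic extension, $p$ is monadic precisely when $p^*$ is a monadic functor, i.e.\ precisely when the standard comparison functor $K^{T^p}\colon (\Cs\downarrow_\Ec B) \to (\Cs\downarrow_\Ec E)^{T^p}$ associated to the monad $T^p = p^*\Sigma_p$ is an equivalence of categories.

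Next I would construct the equivalence $\Phi\colon \Cs^{\downarrow_\Ec \Eq(p)} \to (\Cs\downarrow_\Ec E)^{T^p}$ in both directions. Given a discrete fibration $(f_0,f_1,f_2)\colon G'\to\Eq(p)$, the pullback square $f_0 c' = c f_1$ identifies $G'_1$ with $T^p(G'_0,f_0) = \Eq(p)\times_E G'_0$, and the map $d'\colon G'_1 \to G'_0$ then provides an algebra structure on $(G'_0,f_0)$; the groupoid unit law $d'e'=1$ and the associativity-type law $m(m\times 1)=m(1\times m)$ translate directly into the unit and associativity axioms for a $T^p$-algebra. Conversely, any $T^p$-algebra $\alpha\colon \Eq(p)\times_E X \to X$ on $(X,x)\in(\Cs\downarrow_\Ec E)$ yields a groupoid with $G'_0=X$, $G'_1=\Eq(p)\times_E X$, $G'_2=\Eq(p)\times_E\Eq(p)\times_E X$, and an evident discrete fibration to $\Eq(p)$. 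Mutual inverseness follows from the uniqueness of factorisations through the relevant pullback squares.

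Finally I would verify, by direct inspection of the defining diagrams, that the triangle
\[
\xymatrix{
(\Cs\downarrow_\Ec B) \ar[r]^-{K^p} \ar[dr]_-{K^{T^p}} & \Cs^{\downarrow_\Ec \Eq(p)} \ar[d]^-\Phi \\
& (\Cs\downarrow_\Ec E)^{T^p}}
\]
commutes up to natural isomorphism: for $(A,f)\in(\Cs\downarrow_\Ec B)$, both composites assign to $(A,f)$ the object $p^*(A,f)$ equipped with the canonical algebra structure obtained by applying $p^*$ to the counit $\Sigma_p p^*(A,f)\to (A,f)$. Since $\Phi$ is an equivalence, $K^p$ is an equivalence if and only if $K^{T^p}$ is, i.e.\ if and only if $p$ is monadic. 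The main obstacle is the construction of $\Phi$, and in particular the verification that the groupoid axioms on the $\Eq(p)$-side correspond exactly to the $T^p$-algebra axioms; this is the crux of the cited descent-theoretic result of Janelidze--Tholen, and once $\Phi$ is in hand the rest is a straightforward unwinding of definitions.
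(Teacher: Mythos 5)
Your proposal is correct and follows essentially the same route as the paper, which itself only sketches the argument by citing Janelidze--Tholen: one identifies $\Cs^{\downarrow_{\Ec}\Eq(p)}$ with the Eilenberg--Moore category $(\Cs\downarrow_{\Ec}E)^{T^p}$ for the monad $T^p=p^*\Sigma_p$ and checks that $K^p$ corresponds to the comparison functor $K^{T^p}$, so that the lemma reduces to the definition of monadicity of $p^*$. You merely fill in the details of the equivalence $\Phi$ (the standard correspondence between discrete fibrations over $\Eq(p)$ and descent data) that the paper leaves to the references.
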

$K^p$ restricts to an equivalence
\begin{equation}\label{firsthalf}
\mathrm{SSpl}_{\Gamma}(E,p) \simeq \Cs^{\downarrow_{\sTriv(\Cs)}\Eq(p)}
\end{equation}
between the category of extensions $(A,f)$ of $B$ for which $p^*(f)$ is a split epic trivial extension, and the category of discrete fibrations $(f_0,f_1,f_2)\colon G' \to\Eq(p)$ in $\Cs$ whose components $f_0$, $f_1$ and $f_2$ are split epic trivial extensions (= $\Gamma_{\mathrm{Split}}$-trivial extensions---see the introduction, or below, for the notation $\Gamma_{\mathrm{Split}}$), and we are already halfway to proving (\ref{unnormalclassification}).

In order to find an equivalence
\begin{equation}\label{secondhalf}
\Cs^{\downarrow_{\sTriv(\Cs)}\Eq(p)}\simeq \Xs^{\downarrow_{\mathrm{Split}(\Fc)}\Gal_{\Gamma}(E,p)},
\end{equation}
first of all notice that, since $I(\Ec)\subseteq \Fc$, the reflector $I\colon \Cs\to \Xs$ extends, for any $B\in\Cs$, to a functor $I^B\colon (\Cs\downarrow_{\Ec} B)\to (\Xs\downarrow_{\Fc} I(B))$ in an obvious way. Because $H(\Fc)\subseteq \Ec$ and since $\Ec$ is stable under pullback, $I^B$ has a right adjoint $H^B\colon (\Xs\downarrow_{\Fc} I(B))\to (\Cs\downarrow_{\Ec} B)$, which sends an $(X,\varphi)\in (\Xs\downarrow_{\Fc} I(B))$ to the extension $(A,f)\in (\Cs\downarrow_{\Ec} B)$ defined via the pullback
\[
\xymatrix{
A \ar[r] \ar[d]_{f} & H(X) \ar[d]^{H(\varphi)}\\
B \ar[r]_-{\eta_B} & HI(B).}
\]  
This gives us, for every $B$ in $\Cs$, an adjunction
\begin{equation}\label{inducedadjunction}
\ad{(\Cs\downarrow_{\Ec}B)}{(\Xs\downarrow_{\Fc}I(B))}{I^B}{H^B}
\end{equation}
which restricts to an equivalence
\[
\Triv(B)\simeq (\Xs\downarrow_{\Fc}I(B))
\]
whenever $H^B$ is fully faithful---a situation which is of interest:

\begin{definition}\cite{Janelidze:Recent}
A Galois structure $\Gamma$ is called \emph{admissible} when each functor $H^B\colon {(\Xs\downarrow_{\Fc} I(B))}  \rightarrow {(\Cs\downarrow_{\Ec} B)}$ is fully faithful. 
\end{definition}

Now since $I\colon \Cs\to\Xs$ preserves those pullback-squares (\ref{square}) for which $f$ is a split epimorphism and $f$ and $g$ are in $\Ec$, the adjunction (\ref{inducedadjunction}) induces an adjunction
\[
\ad{\Cs^{\downarrow_{\Ec}G}}{\Xs^{\downarrow_{\Fc}I(G)}}{}{}
\]
for every groupoid $G$ (as in (\ref{groupoid})) in $\Cs$ with $d$ and $c$ (hence, also $p_1$, $m$ and $p_2$) in $\Ec$, and this, in its turn, would restrict to an equivalence
\[
\Cs^{\downarrow_{\Triv(\Cs)}G}\simeq \Xs^{\downarrow_{\Fc}I(G)}.
\]
if $\Gamma$ were admissible. However, instead of requiring this for $\Gamma$, we only ask that
\begin{itemize}
\item
\emph{the induced Galois structure 
\[
\Gamma_{\mathrm{Split}}=(\Cs,\Xs, I,H ,\eta,\epsilon, \mathrm{Split}(\Ec),\mathrm{Split}(\Fc))
\]
is admissible, where the classes $\mathrm{Split}(\Ec)$ and $\mathrm{Split}(\Fc)$ consist of those morphisms in $\Ec$ and $\Fc$, respectively, that are also split epimorphisms.}
\end{itemize}
In this case, we instead obtain an equivalence 
\[
\Cs^{\downarrow_{\sTriv(\Cs)}G}\simeq \Xs^{\downarrow_{\mathrm{Split}(\Fc)}I(G)}
\]
for every groupoid $G$ in $\Cs$ with $d$ and $c$ in $\Ec$. In particular, if $G=\Eq(p)$ for some monadic extension $p\colon E\to B$, we find the sought-after (\ref{secondhalf}).

Combining (\ref{firsthalf}) and (\ref{secondhalf}), we obtain:

\begin{theorem}\label{classificationtheorem}
Assume that $\Gamma=(\Cs,\Xs, I,H ,\eta,\epsilon, \Ec,\Fc)$ is a Galois structure such that
\begin{enumerate}
\item \label{pres}
the left adjoint $I\colon \Cs\to\Xs$ preserves those pullbacks \eqref{square} for which  $f$ is a split epimorphism and $f$ and $g$ are in $\Ec$.
\item \label{restad}
the induced Galois structure 
\[
\Gamma_{\mathrm{Split}}=(\Cs,\Xs, I,H ,\eta,\epsilon, \mathrm{Split}(\Ec),\mathrm{Split}(\Fc))
\]
is admissible.
\end{enumerate}
Then, for any monadic extension $p\colon E\to B$, there is an equivalence of categories
\[
\mathrm{SSpl}_{\Gamma}(E,p) \simeq \Xs^{\downarrow_{\mathrm{Split}(\Fc)}\Gal_{\Gamma}(E,p)}.
\] 
Hence, if $p$ is such that $\mathrm{SSpl}(E,p)=\NExt(B)$ (for instance, if it is a weakly universal monadic extension, or a weakly universal normal extension), there is a category equivalence
\[
\NExt(B) \simeq \Xs^{\downarrow_{\mathrm{Split}(\Fc)}\Gal_{\Gamma}(E,p)}.
\] 
\end{theorem}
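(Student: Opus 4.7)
The plan is to prove Theorem \ref{classificationtheorem} by composing the two equivalences \eqref{firsthalf} and \eqref{secondhalf} set up in the text preceding the statement. The second conclusion of the theorem follows at once from the first by specialising to those $p$ with $\mathrm{SSpl}(E,p) = \NExt(B)$, whose existence is discussed after Proposition \ref{characterisationnormal}.

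For \eqref{firsthalf}, I would invoke Lemma \ref{monadicdiscreet}: since $p$ is monadic, $K^p\colon (\Cs\downarrow_{\Ec} B) \to \Cs^{\downarrow_{\Ec}\Eq(p)}$ is an equivalence. From the large diagram describing $K^p$, the component of $K^p(A,f)$ over $E$ is $p^*(f)$, while the components over $\Eq(p)$ and $\Eq(p)\times_E\Eq(p)$ are obtained from $p^*(f)$ by further pullback along the kernel-pair projections $\pi_i^p$ and their iterates, all of which are split epic (via the appropriate diagonals) and lie in $\Ec$. By Lemma \ref{trivialstable1}, split epic trivial extensions are stable under such pullbacks, so all three components of $K^p(A,f)$ are $\Gamma_{\mathrm{Split}}$-trivial whenever $p^*(f)$ is; the converse is immediate because one of those components is $p^*(f)$ itself. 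This restricts $K^p$ to the desired equivalence \eqref{firsthalf}.

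For \eqref{secondhalf}, I would lift the adjunctions \eqref{inducedadjunction} at the three objects $E$, $\Eq(p)$ and $\Eq(p)\times_E\Eq(p)$ into a single adjunction between $\Cs^{\downarrow_{\Ec}\Eq(p)}$ and $\Xs^{\downarrow_{\Fc}\Gal_\Gamma(E,p)}$. Hypothesis~\eqref{pres} is precisely what makes this possible: it guarantees that $I$ preserves the defining pullback \eqref{pbgp} of $\Eq(p)$ (so that $\Gal_\Gamma(E,p) = I(\Eq(p))$ really is a groupoid in $\Xs$) and, more generally, that $I$ sends the characterising pullback of any discrete fibration over $\Eq(p)$ with components in $\Ec$ to a pullback in $\Xs$, since that pullback is taken along one of the split epic structure maps of $\Eq(p)$, which lies in $\Ec$. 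Hypothesis~\eqref{restad} then restricts the resulting adjunction, via admissibility of $\Gamma_{\mathrm{Split}}$ applied component-wise, to an equivalence between discrete fibrations whose components are split epic trivial extensions in $\Cs$ and those whose components are in $\mathrm{Split}(\Fc)$ in $\Xs$: this is exactly \eqref{secondhalf}.

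The main obstacle I anticipate is the bookkeeping required in the second step: one must check that the component-wise adjunctions $(I^B, H^B)$ assemble coherently into an adjunction of categories of discrete fibrations, i.e.\ that their units and counits intertwine the structure maps $d, c, m, e, \sigma$ of $\Eq(p)$ and $\Gal_\Gamma(E,p)$, and that the induced functors preserve the discrete-fibration property on both sides. Once this (formal but delicate) coherence has been pinned down, everything else falls out from Lemmas \ref{trivialstable1} and \ref{monadicdiscreet} and the explicit descriptions of $K^p$, $I^B$ and $H^B$, combined with hypotheses \eqref{pres} and \eqref{restad}.
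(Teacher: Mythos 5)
Your proposal is correct and follows essentially the same route as the paper: the theorem is obtained by composing the restriction of $K^p$ giving \eqref{firsthalf} (justified via Lemma \ref{monadicdiscreet} and the pullback-stability of split epic trivial extensions from Lemma \ref{trivialstable1}) with the equivalence \eqref{secondhalf} induced component-wise from the adjunctions \eqref{inducedadjunction} using hypotheses \eqref{pres} and \eqref{restad}. The coherence bookkeeping you flag is likewise left implicit in the paper, so nothing is missing relative to its own level of detail.
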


Weakly universal monadic extensions often exist: for instance, if $\Cs$ is a Barr exact category \cite{Barr} with enough (regular) projectives, and $\Ec$ is either the class of regular epimorphisms or the class of \emph{all} morphisms (in either case the monadic extensions are precisely the regular epimorphisms), then clearly \emph{every} $B$ admits a weakly universal monadic extension. It turns out that the existence of a weakly universal monadic extension $p\colon E\to B$ at once implies that of a weakly universal \emph{normal} extension of $B$, if we are in the situation of Theorem \ref{classificationtheorem}. Indeed, we have

\begin{proposition}\label{prnormalisation}
If $\Gamma=(\Cs,\Xs, I,H ,\eta,\epsilon, \Ec,\Fc)$ satisfies the assumptions of Theorem \ref{classificationtheorem}, and if there exists, for a given object $B$ of $\Cs$,  a weakly universal monadic extension $p\colon E\to B$, then the inclusion functor $\NExt(B)\to \MExt(B)$ admits a left adjoint. 

If there is such a $p$ for every $B$, and if monadic extensions are stable under pullback, then also the inclusion functor $\NExt(\Cs)\to \MExt(\Cs)$ has a left adjoint. 
\end{proposition}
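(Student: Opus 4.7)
For the first statement, I would realise the left adjoint $L_B\colon\MExt(B)\to\NExt(B)$ as a composite of three equivalences with one honest adjunction, built from ingredients already present in the proof of Theorem \ref{classificationtheorem}. Since $p$ is weakly universal, it factors through every monadic $f\colon A\to B$, which forces $p^*(f)$ to admit an evident section; Lemma \ref{monadicdiscreet} thus restricts to an equivalence
\[
K^p\colon\MExt(B)\simeq\Cs^{\downarrow_{\mathrm{Split}(\Ec)}\Eq(p)}
\]
onto those discrete fibrations whose components are split epimorphisms in $\Ec$ (splitness of $f_0=p^*(f)$ implies that of $f_1$ and $f_2$ by pullback-stability). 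Together with the equivalences $\NExt(B)\simeq\Cs^{\downarrow_{\sTriv(\Cs)}\Eq(p)}\simeq\Xs^{\downarrow_{\mathrm{Split}(\Fc)}\Gal_\Gamma(E,p)}$ already extracted in the proof of Theorem \ref{classificationtheorem}, and the groupoid-level adjunction
\[
\ad{\Cs^{\downarrow_{\mathrm{Split}(\Ec)}\Eq(p)}}{\Xs^{\downarrow_{\mathrm{Split}(\Fc)}\Gal_\Gamma(E,p)}}{I^{\Eq(p)}}{H^{\Eq(p)}}
\]
obtained by the very same argument extending \eqref{inducedadjunction} to groupoids, the inclusion $\NExt(B)\hookrightarrow\MExt(B)$ is identified with $H^{\Eq(p)}$ and its left adjoint with $I^{\Eq(p)}$. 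The $\Gamma_{\mathrm{Split}}$-admissibility hypothesis is precisely what ensures $H^{\Eq(p)}$ is fully faithful with essential image $\Cs^{\downarrow_{\sTriv(\Cs)}\Eq(p)}$.

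For the second statement, the local $L_B$'s assemble into a global functor $L\colon\MExt(\Cs)\to\NExt(\Cs)$ via pullback-stability of normal extensions, which I would establish as follows. Given $g\colon A'\to B'$ normal, split by some monadic $q\colon E'\to B'$, and $b\colon B\to B'$ arbitrary, the extension $b^*(q)$ is monadic by hypothesis, and pullback-pasting identifies $(b^*(q))^*(b^*(g))$ with the pullback of the split epic trivial extension $q^*(g)$ along $b^*(E')\to E'$; this is again split epic trivial by Lemma \ref{trivialstable1}, so $b^*(g)$ is normal by Proposition \ref{characterisationnormal}. Given a morphism $(a,b)\colon f\to f'$ in $\MExt(\Cs)$, I factor it as $f\to b^*(f')\to f'$ (over $B$, then a pullback projection) and define $L(a,b)$ as the composite $L_B(f)\to L_B(b^*(f'))\to b^*(L_{B'}(f'))\to L_{B'}(f')$, where the middle arrow is adjoint under $L_B\dashv\text{incl}$ to the pullback of the unit $\eta_{f'}$ (the target being normal by the preceding step) and the last is the pullback projection. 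The adjunction property $L\dashv\text{incl}$ then reduces to the local adjunctions $L_B\dashv\text{incl}$, using independence of the chosen factorisation.

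The main obstacle, for the first part, is verifying that the identification of the inclusion $\NExt(B)\hookrightarrow\MExt(B)$ with $H^{\Eq(p)}$ is natural and coherent; this amounts to showing that $\Gamma_{\mathrm{Split}}$-admissibility transports from the extension setting to the groupoid setting in the same way that the admissibility equivalence $\Triv(B)\simeq(\Xs\downarrow_{\Fc}I(B))$ transports to the discrete-fibration equivalence already used in proving Theorem \ref{classificationtheorem}. Once pullback-stability of normal extensions is in hand, the second part is essentially formal.
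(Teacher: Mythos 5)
Your argument is correct and follows the paper's own route: for the first claim you identify, via $K^p$ and Lemma~\ref{localsections}, the inclusion $\NExt(B)\to\MExt(B)$ with the inclusion of $\Cs^{\downarrow_{\sTriv(\Cs)}\Eq(p)}$ into $\Cs^{\downarrow_{\mathrm{Split}(\Ec)}\Eq(p)}$, which is reflective because admissibility of $\Gamma_{\mathrm{Split}}$ makes the groupoid-level adjunction $I^{\Eq(p)}\dashv H^{\Eq(p)}$ decompose as a reflection followed by the equivalence onto $\Xs^{\downarrow_{\mathrm{Split}(\Fc)}\Gal_{\Gamma}(E,p)}$ --- exactly the paper's argument. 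For the second claim the paper simply notes that pullback-stability of monadic extensions yields that of normal extensions (Lemma~\ref{trivialstable1} together with Proposition~\ref{characterisationnormal}) and then invokes Proposition~5.8 of \cite{Im-Kelly}; your explicit pullback-stability computation and the fibrewise gluing of the local reflections are just that citation unpacked, and are sound.
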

\begin{proof}
Since $\Gamma_{\mathrm{Split}}$ is admissible, for every $B$ in $\Cs$ the adjunction (\ref{inducedadjunction}) induces a \emph{reflection}
\[
\ad{(\Cs\downarrow_{\mathrm{Split}(\Ec)}B)}{\sTriv(B).}{}{}
\]
Because $I\colon \Cs\to\Xs$ preserves those pullback-squares (\ref{square}) for which $f$ is a split epimorphism and $f$ and $g$ are in $\Ec$, these reflections, in their turn, induce a reflection
\[
\ad{\Cs^{\downarrow_{\mathrm{Split}(\Ec)}G}}{  \Cs^{\downarrow_{ \sTriv(\Cs) }G}}{}{}
\]
for every groupoid $G$ (as in (\ref{groupoid})) in $\Cs$, with $d$ and $c$ (hence, also $p_1$, $m$ and $p_2$) in $\Ec$. In particular, if $G=\Eq(p)$ for some  extension $p\colon E\to B$, we have a reflection
\begin{equation}\label{reflectivenessadjunction}
\ad{\Cs^{\downarrow_{\mathrm{Split}(\Ec)}\Eq(p)}}{  \Cs^{\downarrow_{ \sTriv(\Cs) }\Eq(p)  }  .}{}{}
\end{equation}
To prove our first claim, it suffices now to observe that the inclusion functor in (\ref{reflectivenessadjunction}) coincides, up to equivalence, with the inclusion functor $\NExt(B)\to \MExt(B)$ whenever $p$ is a weakly universal monadic extension: indeed, in this case, an extension $f\colon A\to B$ is monadic if and only if $p^*(f)$ is a split epimorphism (by Lemma \ref{localsections}), and $f$ is a normal extension if and only if $p^*(f)$ is, moreover, a trivial extension (by Proposition \ref{characterisationnormal} and Lemma \ref{trivialstable1}). Thus, the equivalence $K^p\colon (\Cs\downarrow_{\Ec} B)\to \Cs^{\downarrow_{\Ec} \Eq(p)}$ restricts to equivalences $\MExt(\Cs)\to \Cs^{\downarrow_{\mathrm{Split}(\Ec)}\Eq(p)}$ and $\NExt(B)\to    \Cs^{\downarrow_{ \sTriv(\Cs) }\Eq(p)}$, and the inclusion functor in (\ref{reflectivenessadjunction}) to the inclusion functor $\NExt(B)\to \MExt(B)$. 

The second claim follows from the first by Proposition 5.8 in \cite{Im-Kelly}, since the stability under pullback of monadic extensions implies that of normal extensions, by Lemma \ref{trivialstable1}. 
\end{proof}

Notice that, whenever $p\colon E\to B$ is a weakly universal monadic extension, its \emph{normalisation} (=its reflection in $\NExt(B)$) must be weakly universal too. 

The reflector into $\NExt(\Cs)$ is our object of study in the next section. Here, we want to add that if we drop the assumption that a weakly universal monadic extension $p\colon E\to B$ exists for every $B$, but instead require every extension to be monadic, we still have a reflector $\MExt(\Cs)=\Ext(\Cs)\to \NExt(\Cs)$. Before proving this, we note that  Lemma \ref{trivialstable1}  remains valid in this situation (see, for instance, \cite{JK:Reflectiveness}*{Proposition 2.1}).

\begin{lemma}\label{trivialstable2}
If $\Gamma_{\mathrm{Split}}$ is admissible, then trivial extensions which are also split epimorphisms are stable under pullback. Consequently, if a pullback of a normal extension is monadic, it is a normal extension as well.
\end{lemma}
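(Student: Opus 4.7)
The plan is to first reduce the first assertion to an ``upgraded'' pullback-stability result for $\Gamma_{\mathrm{Split}}$-trivial extensions, and then to derive the second assertion from the first via a short kernel-pair computation.

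For the first assertion, I would begin by observing that the triviality condition (the naturality square for $\eta$ being a pullback) is independent of the choice of class of extensions, so an extension is a $\Gamma_{\mathrm{Split}}$-trivial extension precisely when it is a split-epic trivial extension relative to $\Gamma$. Next, given such an $f\colon A\to B$ and a morphism $g\colon C\to B$ in $\Ec$ with pullback $f'\colon D\to C$, I would use admissibility of $\Gamma_{\mathrm{Split}}$ to write $f\cong H^B(\varphi)$ with $\varphi=I(f)\in \mathrm{Split}(\Fc)$. Factoring the rectangle defining $f'$ through $\eta_C$, and using that $H$ preserves pullbacks as a right adjoint, would identify $f'$ with $H^C(\psi)$, where $\psi\colon I(C)\times_{I(B)}X\to I(C)$ is the pullback of $\varphi$ along $I(g)$. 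Since $\mathrm{Split}(\Fc)$ is pullback-stable in $\Xs$, $\psi\in\mathrm{Split}(\Fc)$, and admissibility of $\Gamma_{\mathrm{Split}}$ then gives that $H^C(\psi)$ is a $\Gamma_{\mathrm{Split}}$-trivial extension. That $f'$ is a split epimorphism is automatic since split epimorphisms are always pullback-stable.

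For the consequence, let $f\colon A\to B$ be a normal extension and $g\colon C\to B$ such that the pullback $f'\colon D\to C$ is a monadic extension. Normality of $f$ means $f^*(f)\colon A\times_BA\to A$ is trivial; being a kernel pair projection, it is moreover split by the diagonal of $f$, hence a split-epic trivial extension. I would then identify $(f')^*(f')\colon D\times_CD\to D$ with the pullback of $f^*(f)$ along the canonical morphism $\pi\colon D\to A$, which lies in $\Ec$ as a pullback of $g\in\Ec$. The first assertion would yield that $(f')^*(f')$ is a trivial extension, whence $f'$ is split by itself; combined with monadicity, this gives that $f'$ is normal.

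The main obstacle lies in the first assertion: admissibility of $\Gamma_{\mathrm{Split}}$ in isolation yields pullback-stability of $\sTriv(\Cs)$ only along morphisms in $\mathrm{Split}(\Ec)$, whereas what is needed is stability along arbitrary morphisms in $\Ec$. The key to circumventing this will be that pulling $H^B(\varphi)$ back along any $g\in\Ec$ still produces some $H^C(\psi)$ with $\psi\in\mathrm{Split}(\Fc)$, so no split-epimorphism hypothesis on $g$ is required.
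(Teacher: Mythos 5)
Your argument is correct. Note that the paper does not actually prove this lemma: for the first assertion it simply defers to Proposition 2.1 of \cite{JK:Reflectiveness}, and the second assertion is left to the reader. What you have written is exactly the standard argument behind that citation: under admissibility of $\Gamma_{\mathrm{Split}}$ a split epic trivial extension is (up to isomorphism) $H^B(\varphi)$ with $\varphi=I(f)\in\mathrm{Split}(\Fc)$; pasting of pullbacks together with preservation of pullbacks by the right adjoint $H$ identifies $g^*(H^B(\varphi))$ with $H^C(I(g)^*(\varphi))$; and admissibility (the counit of $I^C\dashv H^C$ being invertible) makes any such $H^C(\psi)$ with $\psi\in\mathrm{Split}(\Fc)$ a trivial extension. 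Your closing remark correctly isolates the key point, namely that this yields stability along morphisms $g$ that need not be split epimorphisms. One small improvement: nothing in your computation uses $g\in\Ec$ either, and the paper later invokes this lemma for pullbacks along morphisms outside $\Ec$ (for instance along the section $s$ in the proof of Lemma \ref{splitnormal=trivial}), so you should state and prove the first assertion for arbitrary $g$ (whenever the pullback exists); your argument covers this verbatim. The reduction of the second assertion to the first, via the identification of $(f')^*(f')$ with the pullback of the split epic trivial extension $f^*(f)$ along the projection $D\to A$, is also correct.
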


\begin{proposition}\label{goodcriterionnormalisation}
Assume that $\Gamma=(\Cs,\Xs, I,H ,\eta,\epsilon, \Ec,\Fc)$ satisfies the assumptions of Theorem \ref{classificationtheorem} and that  every extension is monadic. The inclusion functor $\NExt(\Cs)\to \Ext(\Cs)$ admits a left adjoint. 
\end{proposition}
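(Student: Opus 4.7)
The plan is to follow the proof of Proposition \ref{prnormalisation}, but to substitute $p=f$ for the weakly universal monadic extension used there (which is no longer assumed to exist): since every extension is monadic by hypothesis, $f$ itself is available for this role. Fix an extension $f\colon A\to B$. The same construction as in the proof of Proposition \ref{prnormalisation} produces, taking $p=f$, a reflection
\[
\ad{\Cs^{\downarrow_{\mathrm{Split}(\Ec)}\Eq(f)}}{\Cs^{\downarrow_{\sTriv(\Cs)}\Eq(f)}}{}{}
\]
which can be transported through the equivalence $K^f$ of Lemma \ref{monadicdiscreet}. Applied to $K^f(f)$---which lies in the source because $f^*(f)=\pi_1^f$ is split by the diagonal $\delta\colon A\to A\times_B A$---this produces an extension $R(f)\in\mathrm{SSpl}_\Gamma(A,f)\subseteq\NExt(B)$ together with a unit $\eta_f\colon f\to R(f)$ in $(\Cs\downarrow_{\Ec} B)$.

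The substance of the proof is to verify that $\eta_f$ exhibits $R(f)$ as the reflection of $f$ in the whole of $\NExt(B)$, not only in the a priori smaller subcategory $\mathrm{SSpl}_\Gamma(A,f)$. The key observation is that any morphism $\psi\colon f\to g$ in $(\Cs\downarrow_{\Ec} B)$ with $g\colon C\to B$ normal automatically has $g\in\mathrm{SSpl}_\Gamma(A,f)$: writing $f=g\psi$, pullback pasting gives $f^*(g)=\psi^*(g^*(g))$, and since $g$ is normal the extension $g^*(g)=\pi_1^g$ is a trivial split epimorphism (split by the diagonal), so Lemma \ref{trivialstable2} forces $f^*(g)$ to be a split epic trivial extension. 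The universal property of the reflection displayed above then furnishes the desired bijection $\mathrm{Hom}_{\Ext(B)}(f,g)\cong\mathrm{Hom}_{\NExt(B)}(R(f),g)$ for every $g\in\NExt(B)$, providing a left adjoint to the inclusion $\NExt(B)\to\Ext(B)$.

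Finally, to upgrade these fibrewise reflections to a global left adjoint $\NExt(\Cs)\to\Ext(\Cs)$, I will invoke Proposition 5.8 in \cite{Im-Kelly}, exactly as at the end of the proof of Proposition \ref{prnormalisation}. The remaining input is the stability of normal extensions under pullback, which follows from Lemma \ref{trivialstable2} together with the assumption that every extension is monadic: any pullback of a normal extension is an extension (by stability of $\Ec$), hence monadic, and Lemma \ref{trivialstable2} then forces it to be normal. I expect the main obstacle to be the second paragraph above: the reflection obtained from $p=f$ only directly classifies those normal extensions of $B$ that happen to be split by $f$, whereas we need universality against all of $\NExt(B)$. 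The pullback-cancellation identity $f^*(g)=\psi^*(g^*(g))$, combined with the pullback-stability of split epic trivial extensions (Lemma \ref{trivialstable2}), is what closes this gap, and it is at this point that the hypothesis of admissibility of $\Gamma_{\mathrm{Split}}$ enters crucially.
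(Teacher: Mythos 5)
Your proposal is correct and follows essentially the same route as the paper: both take $p=f$ (available since every extension is monadic), obtain $R(f)$ from the reflection $\Cs^{\downarrow_{\mathrm{Split}(\Ec)}\Eq(f)}\to\Cs^{\downarrow_{\sTriv(\Cs)}\Eq(f)}$ transported through $K^f$, and then close the gap between $\mathrm{SSpl}_\Gamma(A,f)$ and all of $\NExt(B)$ before invoking Proposition 5.8 of \cite{Im-Kelly}. Your pasting identity $f^*(g)=\psi^*(g^*(g))$ combined with pullback-stability of split epic trivial extensions is precisely the content behind the paper's one-line assertion that the comma-category equivalence restricts to $(f\downarrow\NExt(B))\simeq(K^f(f)\downarrow\Cs^{\downarrow_{\sTriv(\Cs)}\Eq(f)})$.
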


\begin{proof}
Let $f\colon A\to B$ be an extension.  As in the proof of Proposition \ref{prnormalisation}, we have a reflection 
\[
\ad{\Cs^{\downarrow_{\mathrm{Split}(\Ec)}\Eq(f)}}{\Cs^{\downarrow_{ \sTriv(\Cs) }\Eq(f)}}{}{}
\]
since $\Gamma_{\mathrm{Split}}$ is admissible, and because $I\colon \Cs\to\Xs$ preserves those pullback-squares (\ref{square}) for which $f$ is a split epimorphism and $f$ and $g$ are in $\Ec$. Furthermore, the equivalence $K^f\colon (\Cs\downarrow_{\Ec} B)\to \Cs^{\downarrow_{\Ec} \Eq(f)}$ induces an equivalence
\[
(f \downarrow (\Cs\downarrow_{\Ec} B)) \simeq  (K^f(f) \downarrow (\Cs^{\downarrow_{\Ec} \Eq(f)})) =  (K^f(f) \downarrow (\Cs^{\downarrow_{\mathrm{Split}( \Ec)} \Eq(f)}))
\]
which, by Proposition \ref{characterisationnormal} and Lemma \ref{trivialstable1}, restricts to an equivalence 
\[
(f\downarrow \NExt(B)) \simeq (K^f(f) \downarrow \Cs^{\downarrow_{\sTriv(\Cs) }\Eq(f)}).
\]
Since $K^f(f)$, as an object of $\Cs^{\downarrow_{\mathrm{Split}(\Ec)}\Eq(f)}$, has a reflection in $\Cs^{\downarrow_{ \sTriv(\Cs) }\Eq(f)}$, both categories $(K^f(f) \downarrow \Cs^{\downarrow_{\sTriv(\Cs) }\Eq(f)})$ and $(f\downarrow \NExt(B))$ have an initial object. Consequently, $f$ has a reflection in $\NExt(B)$. Finally, since normal extensions are stable under pullback by Lemma \ref{trivialstable2}, we can apply Proposition 5.8 in \cite{Im-Kelly} and conclude that  the inclusion $\NExt(\Cs)\to \Ext(\Cs)$ has a left adjoint.
\end{proof}

In concluding this section, let us return to what we wrote in the introduction. The examples of Galois structures given there do indeed satisfy conditions \ref{pres} and \ref{restad} of Theorem \ref{classificationtheorem}: the former is well known to hold in the case of an additive $I\colon\Cs\to \Xs$ (between additive categories $\Cs$ and $\Xs$), and remains valid for a protoadditive $I\colon\Cs\to \Xs$ (between pointed protomodular $\Cs$ and $\Xs$), by Proposition 2.2 in \cite{Everaert-Gran-TT}. By Proposition 3 and Example 1 in \cite{EverMaltsev}, it also holds if $I\colon\Cs\to \Xs$ is the reflector into a Birkhoff subcategory $\Xs$ of an exact Mal'tsev category $\Cs$. Moreover, condition \ref{pres} implies condition \ref{restad}, in each of these cases:
\begin{itemize}
\item
when the adjunction $I \dashv H$ is a reflection, the admissibility of $\Gamma_{\mathrm{Split}}$ can equivalently be described as the preservation by $I\colon \Cs\to \Xs$ of every pullback  (\ref{square}) for which $f$ is in $\mathrm{Split}(\Ec)$ and $g=\eta_C\colon C\to HI(C)$ is a reflection unit (see Proposition 2.1 in \cite{JK:Reflectiveness}). 
\item
when, moreover, $\eta_B\colon B\to HI(B)$ is an extension for every $B$, we thus have that the first condition  of Theorem \ref{classificationtheorem} implies the second.
\end{itemize}

\section{The normalisation functor as a Kan extension}
Let $\Gamma=(\Cs,\Xs, I,H ,\eta,\epsilon, \Ec,\Fc)$ be a Galois structure such that the induced Galois structure $\Gamma_{\mathrm{Split}}=(\Cs,\Xs, I,H ,\eta,\epsilon, \mathrm{Split}(\Ec),\mathrm{Split}(\Fc))$ is admissible. For every $B\in \Cs$, $I\dashv H$ induces an adjunction 
\[
\ad{(\Cs\downarrow_{\mathrm{Split}(\Ec)}B)}{(\Xs\downarrow_{\mathrm{Split}(\Fc)}I(B))}{}{}
\]
which, by admissibility, decomposes into a reflection followed by an equivalence
\[
\xymatrix{
(\Cs\downarrow_{\mathrm{Split}(\Ec)}B) \ar@/^2ex/[r]^-{}  \ar@{}[r]|-\bot & \sTriv(B) \ar@/^2ex/[l]^-{}  \ar@/^2ex/[r]^-{}  \ar@{}[r]|-\simeq & (\Xs\downarrow_{\mathrm{Split}(\Fc)}I(B)) \ar@/^2ex/[l]^-{}.
}
\]
In particular, we have that the inclusion functor $\sTriv(B) \to \ExtS(B)$ admits a left adjoint, for every $B\in \Cs$. Since, by admissibility of $\Gamma_{\mathrm{Split}}$, split epic trivial extensions are stable under pullback (Lemma \ref{trivialstable2}), we can apply Proposition 5.8 in \cite{Im-Kelly} and conclude that also the inclusion functor $\sTriv(\Cs) \to \ExtS(\Cs)$, which we denote by $\tilde H_1$, has a left adjoint.  We call it $T_1$, and we write $\tilde \eta^1$ for the unit of the adjunction $T_1 \dashv \tilde H_1$. When also the inclusion functor $H_1 \colon \NExt(\Cs) \to \Ext(\Cs)$ has a left adjoint $I_1$ (as, for instance, in Propositions \ref{prnormalisation} and \ref{goodcriterionnormalisation}),  we obtain a square of functors
\begin{equation}\label{squareoffunctors}
\vcenter{\xymatrix{
 \ExtS(\Cs) \ar[r]^-K \ar[d]_-{T_1} & \Ext(\Cs) \ar[d]^-{I_1}\\
 \sTriv(\Cs)\ar[r]_-{\tilde K}&\NExt(\Cs)
}}
\end{equation}
in which $K$ and $\tilde K$ are the inclusion functors. This square commutes, up to natural isomorphism. Indeed, first of all, we have, for any split epic extension $p\colon E\to B$, that its reflection $T_1(f)$ in $\sTriv(B)$ is also its reflection in $\NExt(B)$, since 
\[
(p\downarrow \NExt(B)) = (p \downarrow\sTriv(B))
\]
by the right-cancellation property of split epimorphisms  and by
\begin{lemma}\label{splitnormal=trivial}
If $\Gamma_{\mathrm{Split}}$ is admissible, then a split epic extension is normal if and only if it is trivial.
\end{lemma}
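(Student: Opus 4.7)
The plan is to handle the two implications separately.

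For the direction trivial $\Rightarrow$ normal, I note that any split epic extension $f$ is automatically monadic by Lemma~\ref{localsections} applied with $p=1_B$, which is itself a monadic extension since $\Ec$ contains all isomorphisms and $1_B^{*}$ is the identity functor. If $f$ is moreover trivial, then the kernel pair projection $\pi_1=f^{*}(f)$ is a pullback of a split epic trivial extension, hence trivial by Lemma~\ref{trivialstable2}. Thus $f$ is normal.

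For the converse (normal $\Rightarrow$ trivial), let $f\colon A\to B$ be a normal extension with section $s\colon B\to A$. The key step is to exhibit $f$ as a retract of the second kernel pair projection $\pi_2\colon A\times_B A\to A$ in the arrow category of $\Cs$. The intended morphism of arrows $f\to\pi_2$ would have codomain component $s\colon B\to A$ and domain component $\langle 1_A, sf\rangle\colon A\to A\times_B A$ (well-defined since $f\cdot 1_A = f = f\cdot sf$), while the intended morphism $\pi_2\to f$ has codomain component $f$ and domain component $\pi_1$ (commutativity by $f\pi_1=f\pi_2$). A direct check gives that the composite $f\to\pi_2\to f$ has codomain part $fs=1_B$ and domain part $\pi_1\langle 1_A, sf\rangle = 1_A$, so it equals $\mathrm{id}_f$.

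Next, $\pi_1$ is trivial by the normality of $f$, and so is $\pi_2$, since it factors as $\pi_1$ composed with the swap automorphism of $A\times_B A$ and composition with an isomorphism preserves triviality. Applying the natural transformation $\eta$ and the functor $HI$ to the retract in the arrow category yields, in the category of commutative squares of $\Cs$, a retract of the naturality square of $\pi_2$ by that of $f$. It then suffices to observe that pullback squares are closed under retracts in the category of commutative squares: the operation sending a square to its induced comparison map into the pullback is functorial, and isomorphisms (viewed as objects of the arrow category) are preserved under retracts. Hence the naturality square of $f$ is a pullback, and $f$ is trivial.

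The only ingredients requiring verification are the retract identities above and the closure of pullback squares under retracts in the category of commutative squares; both amount to routine diagram chases and are not expected to present real difficulty.
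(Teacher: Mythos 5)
Your proof is correct, and while the ``trivial $\Rightarrow$ normal'' half coincides with the paper's (monadicity of split epic extensions plus Lemma \ref{trivialstable2} applied to $f^*(f)$), your converse takes a genuinely different route. The paper exhibits $f$ as a \emph{pullback} of the split epic trivial extension $\pi_1^f$ along the section $s$ (via the pullback square with horizontal maps $\langle sf,1\rangle$ and $s$) and then invokes Lemma \ref{trivialstable2} once more; you instead exhibit $f$ as a \emph{retract} of $\pi_2$ in the arrow category and use the purely formal facts that the naturality-square construction is functorial and that pullback squares are closed under retracts. Your retract identities check out ($\pi_2\langle 1_A,sf\rangle=sf$, $f\pi_1=f\pi_2$, $\pi_1\langle 1_A,sf\rangle=1_A$, $fs=1_B$), and the closure of pullback squares under retracts holds by a direct universal-property chase, so the comparison-map formulation (which tacitly needs the pullback of $HI(f)$ along $\eta_B$ to exist) is not even essential. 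The paper's argument is shorter because Lemma \ref{trivialstable2} is already on the table and does double duty for both implications; your argument buys something the paper's does not make explicit, namely that the implication ``split epic normal $\Rightarrow$ trivial'' is valid for an \emph{arbitrary} Galois structure, with admissibility of $\Gamma_{\mathrm{Split}}$ needed only for the reverse implication.
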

\begin{proof} 
Every split epic extension is monadic (see \cite{Janelidze-Tholen2}) and, by Lemma \ref{trivialstable2}, split epic trivial extensions are stable under pullback, which implies they are normal.

For the converse, it suffices to consider, for any split epic normal extension $f\colon A\to B$, with section $s\colon B \to A$, the diagram
\[
\xymatrix{
A \ar[r]^-{\langle sf , 1\rangle} \ar@<0.5ex>[d]^-f & \Eq(f) \ar[r]^-{\pi_2^f} \ar@<0.5ex>[d]^-{\pi_1^f} & A \ar@<0.5ex>[d]^-f \\
B\ar[r]_-s \ar@<0.5ex>@{.>}[u] & A \ar[r]_-f \ar@<0.5ex>@{.>}[u] & B \ar@<0.5ex>@{.>}[u] 
}
\]
in which each square is a pullback, and to use, again, Lemma \ref{trivialstable2}.
\end{proof}

In order to conclude from this that $T_1(f)$ is also the reflection in $\NExt(\Cs)$ of $f$, for $f$ a split epic trivial extension, we would like to apply, once more, Proposition 5.8 in \cite{Im-Kelly}. While we have no reason to assume that arbitrary normal extensions  are stable under pullback,  the pullback stability given in Lemma \ref{trivialstable2} is easily seen to suffice, here. Whence

\begin{lemma}\label{usefulllemma} Assume that $\Gamma_{\mathrm{Split}}$ is admissible, and let  $p \colon A \to B$ be an object of $\mathrm{Ext}_{\mathrm{Split}(\Ec)}(\Cs)$.  The reflection $T_1(p)$ of $p$ in the category $\sTriv(\Cs)$ is also its reflection in $\NExt(\Cs)$ (irrespective of the existence of $I_1$).
\end{lemma}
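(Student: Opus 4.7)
The plan is to verify the universal property of $\tilde\eta^1_p\colon p\to T_1(p)$ directly in $\Ext(\Cs)$, by reducing each factorisation problem to the already-established universal property in $\sTriv(\Cs)$ via a pullback argument. First, $T_1(p)\in\sTriv(\Cs)$ is automatically normal: as a split epic extension it is monadic, and Lemma \ref{trivialstable2} applied to its kernel pair projection (a pullback of itself) shows that it is split by itself. Hence $\tilde\eta^1_p$ genuinely lands in $\NExt(\Cs)$.

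Given any $(u,v)\colon p\to n$ in $\Ext(\Cs)$ with $n\colon X\to Y$ normal, I would form the pullback $q=v^*(n)\colon P\to B$ and factor $(u,v)$ through the canonical map $\tilde v\colon q\to n$ by a unique morphism $w\colon p\to q$ over $B$. The crucial step is the observation that $q\in\sTriv(B)$: right-cancellation applied to $p=qw$ makes $q$ split epic; split epic extensions are automatically monadic (as recalled in the proof of Lemma \ref{splitnormal=trivial}); Lemma \ref{trivialstable2} then promotes this monadic pullback $q$ of the normal extension $n$ to a normal extension; and Lemma \ref{splitnormal=trivial} finally identifies the split epic normal extension $q$ as trivial. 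The universal property of $\tilde\eta^1_p$ therefore produces a unique $\bar w\colon T_1(p)\to q$ with $\bar w\circ\tilde\eta^1_p=w$, and composing with $\tilde v$ yields the desired factorisation $T_1(p)\to n$.

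For uniqueness, any second factorisation $(\alpha,\beta)\colon T_1(p)\to n$ of $(u,v)$ through $\tilde\eta^1_p$ must have $\beta=v$, because $\tilde\eta^1_p$ is the unit of the fibrewise reflection into $\sTriv(B)$ and hence is the identity on base objects. The pullback property of $q$ then re-encodes $(\alpha,v)$ as a morphism $T_1(p)\to q$ over $B$, which by the uniqueness clause in $\tilde\eta^1_p$'s universal property must agree with $\bar w$. The main subtlety---and, I expect, the point of the text's parenthetical remark---is precisely the claim that $q$ is normal: one would like to invoke full pullback stability of $\NExt$, which is \emph{not} assumed here, but in our situation $q$ is split epic and therefore automatically monadic, which is exactly what is needed to bring the second clause of Lemma \ref{trivialstable2} into play.
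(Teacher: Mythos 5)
Your proof is correct and follows essentially the same route as the paper: the paper establishes $(p\downarrow\NExt(B))=(p\downarrow\sTriv(B))$ by right-cancellation of split epimorphisms together with Lemma \ref{splitnormal=trivial}, and then globalises by the argument of Proposition 5.8 of Im--Kelly, observing that the pullback stability of Lemma \ref{trivialstable2} suffices; your pullback $q=v^*(n)$, shown to be split epic, hence monadic, hence normal, hence trivial, is precisely an explicit unpacking of that same chain of lemmas. The only difference is that you verify the universal property directly rather than citing Im--Kelly, which is a harmless (indeed clarifying) elaboration.
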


In particular, the square (\ref{squareoffunctors}) indeed commutes, up to natural isomorphism, whenever $I_1$ exists.

What we want to prove now is that under the additional condition that every extension is a regular epimorphism, the normalisation functor $I_1$, when it exists,  coincides with the pointwise left Kan extension of $\tilde K \circ T_1$ along $K$. 

 We shall first show that  the functor $K$ is dense, i.e.\ the functor
\[
\xymatrix{
(K\downarrow f) \ar[r]^-{P^f} &\ExtS(\Cs) \ar[r]^-K& \Ext(\Cs)
}
\]
where $P^f$ is the obvious forgetful functor admits $f$ as colimit. For this, let us consider the full subcategory  $J^f$ of $(K\downarrow f)$ determined by
 \[
 \xymatrix{
p_1^f\ar@<0.5ex>[rr]^-{\pi_1^q} \ar@<-0.5ex>[rr]_-{\pi_2^q}  \ar[rdd]_-{r}&&  \pi_1^f \ar[ldd]^-{q} \\\\
& f
}
 \]
 where $q=(\pi_2^f,f)$, $\pi_1^q =(\tau,\pi_1^f)$, $\pi_2^q=(p_2^f,\pi_2^f) $ and $r=q\circ \pi_1^q =q \circ \pi_2^q$. One can easily prove that the inclusion functor $L^f \colon J^f \to (K\downarrow f)$ is final, i.e.~for any object $P=(p,(p_1,p_0) \colon p\to f)$ in $(K\downarrow f)$, the category $(P\downarrow L^f)$ is non-empty and connected. Let us prove here that $(P\downarrow L^f)$ is connected. Any three objects 
    \[
\xymatrix{
p_1^f  \ar[rrdd]_-r && p \ar@<0.5ex>[rr]^-{(\overline{p}_1,\overline{p}_0)} \ar@<-0.5ex>[rr]_-{(\overline{p}_1',\overline{p}_0')}  \ar[ll]_-{\langle (\overline{p}_1'',\overline{p}_0'') ,(\overline{p}_1',\overline{p}_0')\rangle } \ar[dd]|-{(p_1,p_0)} && \pi_1^f \ar[lldd]^-{q} \\\\
 &&f
}
\]
 in $(P\downarrow L^f)$ are connected as shown in the commutative diagram
  \[
 \xymatrix{
p \ar[dd]|-{\langle (\overline{p}_1'',\overline{p}_0'') ,(\overline{p}_1',\overline{p}_0')\rangle}   \ar@{=}[rr]& &  p  \ar[dd]|-{(\overline{p}_1',\overline{p}_0')}  && p \ar@{=}[rr] \ar@{=}[ll] \ar@{.>}[dd]|-{\langle (\overline{p}_1',\overline{p}_0'), (\overline{p}_1,\overline{p}_0)\rangle} &&p\ar[dd]|-{(\overline{p}_1,\overline{p}_0)} \\\\
p_1^f \ar@{.>}[rr]^-{\pi_2^q} \ar@/_/[rrdrd]_-r && \pi_1^f  \ar[ddr]_-{q} && p_1^f \ar@{.>}[rr]^-{\pi_2^q} \ar@{.>}[ll]_-{\pi_1^q} \ar@{.>}[ddl]^-r && \pi_1^f .\ar@/^/[llldd]^-q \\\\
&& &f
 }
 \]
The last case to be considered can easily be deduced from this. Now, from the assumption that $f$ is a regular epimorphism (as is every morphism in $\Ec$), we conclude that $q=(\pi_2^f,f)\colon \pi_1^f \to f$ is the coequaliser of its kernel pair
 \[
\xymatrix{
p_1^f \ar@<0.5ex>[r]^-{\pi_1^q} \ar@<-0.5ex>[r]_-{\pi_2^q}& \pi_1^f.
}
\]
This precisely means that the functor 
\[
\xymatrix{
J^f \ar[r]^-{L^f} & (K\downarrow f) \ar[r]^-{P^f} & \ExtS(\Cs) \ar[r]^-K& \Ext(\Cs)
}
\] 
has $f$ as colimit.

\begin{theorem}\label{normalisationasKanextension} Assume that $\Gamma=(\Cs,\Xs, I,H ,\eta,\epsilon, \Ec,\Fc)$ is a Galois structure such that
\begin{enumerate}
\item
the induced Galois structure 
\[
\Gamma_{\mathrm{Split}}=(\Cs,\Xs, I,H ,\eta,\epsilon, \mathrm{Split}(\Ec),\mathrm{Split}(\Fc))
\]
 is admissible.
\item 
every morphism in $\Ec$ is a regular epimorphism.
\end{enumerate}

Then the inclusion functor $H_1 \colon \NExt(\Cs) \to \Ext(\Cs)$ has a left adjoint $I_1$ if and only if the pointwise left Kan extension of $\tilde K\circ T_1$ along $K$ exists and, in this case, $I_1 =\Lan_K(\tilde K\circ T_1)$. 
\[
\xymatrix{
\Ext(\Cs) \ar@{.>}[rrd]^-{I_1=\Lan_K(\tilde K\circ T_1)} &&\\
\ExtS(\Cs) \ar@{}[rru]|(0.3){\cong} \ar[rr]_{\tilde K \circ T_1} \ar[u]^-{K} & &\NExt(\Cs).
}
\]
\end{theorem}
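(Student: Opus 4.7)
The plan is to exploit the density of $K$ (established in the paragraphs immediately preceding the theorem) together with Lemma~\ref{usefulllemma}, which identifies $\tilde K \circ T_1(p)$ with the reflection of $K(p)$ in $\NExt(\Cs)$ whenever $p\in\ExtS(\Cs)$.

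For the ``only if'' direction, I would assume $I_1 \dashv H_1$ exists. The square~(\ref{squareoffunctors}) commutes up to natural isomorphism, so $I_1 \circ K \cong \tilde K \circ T_1$. As a left adjoint, $I_1$ preserves all colimits; combined with the density of $K$, which exhibits every $f \in \Ext(\Cs)$ as the colimit of $K\circ P^f\circ L^f$, this yields
\[
I_1(f) \cong \mathrm{colim}(I_1 \circ K \circ P^f \circ L^f) \cong \mathrm{colim}(\tilde K \circ T_1 \circ P^f \circ L^f).
\]
Since $L^f$ is final, the right-hand colimit agrees with the colimit of $\tilde K \circ T_1 \circ P^f$ over the entire comma category $(K\downarrow f)$; hence the defining colimits for the pointwise Kan extension all exist in $\NExt(\Cs)$, and $\Lan_K(\tilde K \circ T_1)$ coincides with $I_1$.

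For the ``if'' direction, I would set $L := \Lan_K(\tilde K \circ T_1)$ and verify directly that $L \dashv H_1$. For each $p \in \ExtS(\Cs)$ and each $n \in \NExt(\Cs)$, Lemma~\ref{usefulllemma} provides a bijection
\[
\NExt(\Cs)(\tilde K\circ T_1(p),\, n) \cong \Ext(\Cs)(K(p),\, H_1(n)),
\]
natural in $p$. Combining the universal property of the pointwise Kan extension with this bijection, and then invoking the density of $K$ applied to the representable $\Ext(\Cs)(-, H_1(n))$, one obtains
\[
\NExt(\Cs)(L(f),\, n) \cong \lim_{(p,\phi)\in(K\downarrow f)} \Ext(\Cs)(K(p),\, H_1(n)) \cong \Ext(\Cs)(f,\, H_1(n)),
\]
naturally in $f$ and $n$. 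This exhibits $L$ as a left adjoint to $H_1$, so $I_1 = L$.

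The main obstacle is the bookkeeping in the second direction: checking that the bijections supplied by Lemma~\ref{usefulllemma} are natural with respect to morphisms $(p,\phi) \to (p',\phi')$ in the comma category $(K\downarrow f)$, so that the limit manipulation above is legitimate. This is essentially formal from the universal property of the reflection $\tilde\eta^1_p \colon p \to \tilde K T_1(p)$, but it is where the bulk of the verification lives.
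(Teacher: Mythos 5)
Your proposal is correct and rests on the same two pillars as the paper's own proof --- the density of $K$ and Lemma \ref{usefulllemma} --- so it is essentially the same argument. The only difference is organisational: the paper packages both directions of the ``iff'' at once into a single isomorphism of categories $(f\downarrow H_1)\cong\Cocone(\tilde K\circ T_1\circ P^f)$, whereas you prove the two implications separately, your ``if'' direction being the hom-set--level version of that isomorphism and your ``only if'' direction matching the paper's appeal to the fact that left adjoints preserve (the colimits computing) left Kan extensions.
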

\begin{proof}
$H_1$ has a left adjoint if and only if for all $f$ in $\Ext(\Cs)$, $(f \downarrow H_1)$ has an initial object.  We are going to show that one has an isomorphism
\[
(f \downarrow H_1) \cong \Cocone(\tilde K\circ T_1 \circ P^f)
\]
for any $f$ in $\Ext(\Cs)$. This allows us to assert that $H_1$ has a left adjoint if and only if the pointwise left Kan extension $\Lan_K(\tilde K\circ T_1)$ exists.
Let $f$ be in $\Ext(\Cs)$ and $\lambda^f=(\lambda^f_p \colon p \to f)_{p\in \mathrm{Ext}_{\mathrm{Split}(\Ec)}(\Cs)}$ be the cocone defined by the comma square
\[
\xymatrix{
(K\downarrow f) \ar[r] \ar[d]_-{P^f} \ar@{}[rd]|-{\rotatedarrow_{\lambda^f}} & 1 \ar[d]^-f \\
\ExtS(\Cs) \ar[r]_-K &  \Ext(\Cs).
}
\]
The density of $K$ implies that one has an isomorphism 
\[
(f\downarrow \Ext(\Cs)) \to \Cocone (K\circ P^f)  
\]
defined on an objet $G= ((g_1,g_0)\colon f \to g, g)$ by
 \[
\lambda^G=( \lambda^G_p= (g_1,g_0)\circ  \lambda^f_p \colon p  \to g)_{p \in \ExtS(\Cs)}.
\] 
Now, when $g$ is in $\NExt(\Cs)$, one can associate with $\lambda^G$ a cocone
\[
\tilde \lambda^G =(\tilde \lambda^G_p\colon \tilde KT_1(p) \to g )_{p\in \ExtS(\Cs)}
\] 
on $\tilde K\circ T_1 \circ P^f$ where $\tilde \lambda^G_p$ is the (unique) factorisation of $\lambda^G_p \colon p \to g$ through $\tilde \eta^1_p\colon p \to T_1(p)=\tilde K T_1(p)$ (see Lemma \ref{usefulllemma}). The assignment $G \mapsto \tilde \lambda^G$ extends to the desired isomorphism. 

Now let us suppose that the normalisation functor $I_1$ exists. Since $K$ is dense, one has a left Kan extension
\[
\xymatrix{
\Ext(\Cs) \ar@{.>}[rrd]^{1_{\Ext(\Cs)}} &&\\
\ExtS(\Cs) \ar@{}[rru]|(0.3){\Uparrow^{1_K}} \ar[rr]_{K} \ar[u]^-{K} & &\Ext(\Cs)
}
\]
and it is preserved by $I_1$ (see \cite{MacLane}), that is $I_1$ is the left pointwise Kan extension of the functor $I_1 \circ K\cong \tilde K \circ T_1$ along $K$:
\[
\xymatrix{
\Ext(\Cs) \ar@{.>}[rrd]^{1_{\Ext(\Cs)}} \ar@/^4ex/[rrrrd]^-{I_1=I_1 \circ 1_{\Ext(\Cs)}}&&\\
\ExtS(\Cs) \ar@{}[rru]|(0.3){\Uparrow^{1_K}} \ar[rr]_{K} \ar[u]^-{K} \ar@/_7ex/[rrrr]_-{\tilde K\circ T_1}& &\Ext(\Cs) \ar[rr]_-{I_1} \ar@{}[d]|-{\cong}  \ar@{}[u]|-{\Uparrow^{1_{I_1} * 1_K}}&& \NExt(\Cs).\\
&&}
\]
\end{proof}

Theorem 4.3 has the following corollary (a similar result was obtained independently by Montoli, Rodelo and Van der Linden in \cite{MRVdL4}*{Theorem 2.10}):

\begin{corollary} If $\Gamma=(\Cs,\Xs, I,H ,\eta,\epsilon, \Ec,\Fc)$ satisfies the assumptions of Theorem \ref{normalisationasKanextension} and if the coequaliser (let us write $\overline{f}$ for its codomain) of 
\[
\xymatrix{
T_1(p_1^f) \ar@<0.5ex>[rr]^-{T_1(\tau,\pi_1^f)} \ar@<-0.5ex>[rr]_-{T_1(p_2^f,\pi_2^f)}  && T_1(\pi_1^f)
}
\]
exists in $\NExt(\Cs)$ for every $f$ in $\Ext(\Cs)$, then the normalisation functor  $I_1$ exists and $I_1(f)=\overline{f}$.

\end{corollary}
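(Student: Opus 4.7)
The plan is to deduce the corollary directly from Theorem \ref{normalisationasKanextension} by computing the value of the pointwise left Kan extension $\Lan_K(\tilde K\circ T_1)$ at each $f\in\Ext(\Cs)$ and identifying it with $\overline{f}$.

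First I would recall, from Theorem \ref{normalisationasKanextension}, that the normalisation functor $I_1$ exists if and only if the pointwise left Kan extension $\Lan_K(\tilde K\circ T_1)$ exists, in which case the two functors coincide up to natural isomorphism. By definition of a pointwise left Kan extension, its value at $f\in\Ext(\Cs)$ is, when it exists, the colimit in $\NExt(\Cs)$ of the composite diagram
\[
(K\downarrow f)\xrightarrow{P^f}\ExtS(\Cs)\xrightarrow{\tilde K\circ T_1}\NExt(\Cs).
\]
Hence the task reduces to showing that, under the standing hypothesis, this colimit exists and is isomorphic to $\overline{f}$ for every $f$.

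Next I would invoke the finality of the inclusion $L^f\colon J^f\to (K\downarrow f)$ already established in the proof of Theorem \ref{normalisationasKanextension}; this is where the assumption that every morphism in $\Ec$ is a regular epimorphism is used. Since $L^f$ is final, the colimit of $\tilde K\circ T_1\circ P^f$ coincides with the colimit of the restricted diagram
\[
J^f\xrightarrow{L^f}(K\downarrow f)\xrightarrow{P^f}\ExtS(\Cs)\xrightarrow{\tilde K\circ T_1}\NExt(\Cs).
\]
Unpacking $J^f$, it consists of just two objects $p_1^f$ and $\pi_1^f$, together with exactly two non-identity arrows $\pi_1^q=(\tau,\pi_1^f)$ and $\pi_2^q=(p_2^f,\pi_2^f)$ from the first to the second; the arrows $r$ and $q$ to $f$ recording how these objects live in $(K\downarrow f)$ are stripped off by $P^f$. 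Applying $\tilde K\circ T_1$ to this parallel pair produces precisely
\[
T_1(p_1^f)\rightrightarrows T_1(\pi_1^f)
\]
with the two morphisms $T_1(\tau,\pi_1^f)$ and $T_1(p_2^f,\pi_2^f)$ of the corollary's hypothesis. A colimit of a diagram of this shape is nothing but the coequaliser of the parallel pair, which exists in $\NExt(\Cs)$ by assumption and equals $\overline{f}$.

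Putting the steps together: for every $f\in\Ext(\Cs)$, the coequaliser $\overline{f}$ computes $\Lan_K(\tilde K\circ T_1)(f)$, so the pointwise left Kan extension exists objectwise and hence, on assembling the universal cocones, as a functor. Theorem \ref{normalisationasKanextension} then delivers $I_1\cong \Lan_K(\tilde K\circ T_1)$, with $I_1(f)=\overline{f}$. There is no serious obstacle here: the categorical heavy lifting (density of $K$ and finality of $L^f$) was already carried out in the proof of Theorem \ref{normalisationasKanextension}, and what remains is the routine identification of a colimit over $J^f$ with a coequaliser, together with a naturality check that assembles the objectwise isomorphisms into the functor $I_1$.
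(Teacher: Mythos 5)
Your proposal is correct and follows exactly the route the paper intends (the corollary is left without an explicit proof, but the density of $K$, the finality of $L^f\colon J^f\to(K\downarrow f)$, and the identification of the colimit over $J^f$ with the coequaliser of the kernel-pair projections are all set up in the run-up to Theorem \ref{normalisationasKanextension} precisely for this deduction). The only small imprecision is that $J^f$, being a \emph{full} subcategory of $(K\downarrow f)$, contains more morphisms than the two projections (e.g.\ the diagonal $\pi_1^f\to p_1^f$), but since any such morphism factors as $\langle h,1\rangle$ through the kernel pair, cocone-compatibility with the parallel pair implies compatibility with all of them, so the colimit over $J^f$ is indeed the stated coequaliser.
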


\section*{Acknowledgements} We are grateful to Tim Van der Linden for helpful comments on a preliminary version of the paper.

\bibliography{tim}
\bibliographystyle{amsplain}

\end{document}